% Upper-case    A B C D E F G H I J K L M N O P Q R S T U V W
% Lower-case    a b c d e f g h i j k l m n o p q r s t u v
% Digits        0 1 2 3 4 5 6 7 8 9 % Exclamation   !
% Double quote "          Hash (number) # % Dollar        $
% Percent      %          Ampersand     & % Acute accent  '
% Left paren   (          Right paren   ) % Asterisk      *
% Plus         +          Comma         , % Minus         -
% Point        .          Solidus       / % Colon         :
% Semicolon    ;          Less than     < % Equals        =
% Greater than >          Question mark ?  % At            @
% Left bracket [          Backslash     \ % Right bracket ]
% Circumflex   ^          Underscore    _ % Grave accent  `
% Left brace   {          Vertical bar  | % Right brace   }
% Tilde        ~
%%%%%%%%%%%%%%%%%%%%%%%%%%%%%%%%%%%%%%%%%%%%%%%%%%%%%%%%%%%%%%%%%%%
%
% Format LaTeX2e
%
%%%%%%%%%%%%%%%%%%%%%%%%%%%%%%%%%%%%%%%%%%%%%%%%%%%%%%%%%%%%%%%%%%%

\documentclass[12pt,a4paper]{amsart}
\usepackage{amsfonts}
\usepackage[active]{srcltx}

\usepackage{enumerate}
\usepackage[notref,notcite]{showkeys}
\usepackage{amsmath,amssymb,xspace,amsthm}

\newtheorem{theorem}{Theorem}[section]
\newtheorem{example}{Example}[section]
\newtheorem{remark}[theorem]{Remark}
\newtheorem{lemma}[theorem]{Lemma}
\newtheorem{proposition}[theorem]{Proposition}
\newtheorem{corollary}[theorem]{Corollary}

\newtheorem{definition}[theorem]{Definition}
\numberwithin{equation}{section} \theoremstyle{definition}

\def\k{\kappa}

\def\span{\operatorname{span}}

\newcommand{\ZZ}{{\mathbb Z}}

\newcommand{\C}{\ensuremath{\mathbb C}\xspace}

\renewcommand{\a}{\ensuremath{\alpha}}
\renewcommand{\l}{\ensuremath{\lambda}}

\newcommand{\Z}{\ensuremath{\mathbb{Z}}\xspace}
\newcommand{\N}{\ensuremath{\mathbb{N}}\xspace}

\newcommand{\W}{\ensuremath{W}\xspace}

\newcommand{\rank}{\operatorname{rank}}

\renewcommand{\phi}{\varphi}

\renewcommand{\leq}{\leqslant}

\newcommand{\K}{\ensuremath{\mathcal{\K}}\xspace}

\def\mg{\mathfrak{g}}
\def\mh{\mathfrak{h}}

\def\sl{\mathfrak{sl}}
\def\gl{\mathfrak{gl}}

\def\ot{\otimes}
\def\l{\lambda}

\def\K{\mathcal{K}}
\def\F{\mathcal{F}}
\def\T{\mathcal{T}}
\def\L{\mathfrak{L}}

\def\span{\text{span}}

\def\End{\text{End}}

%\hfuzz5pc

%\synctex=1

\begin{document}
\title[Simple Witt modules]{Simple Witt modules that are finitely generated over the cartan subalgebra}
\author{Xiangqian Guo, Genqiang Liu, Rencai Lu  and Kaiming Zhao}
\date{}\maketitle
\begin{abstract}  Let $d\ge1$ be an integer, $W_d$ and  $\K_d$ be the Witt  algebra and the weyl algebra over the Laurent polynomial  algebra $A_d=\mathbb{C} [x_1^{\pm1}, x_2^{\pm1}, ..., x_d^{\pm1}]$, respectively. For any    $\gl_d$-module $M$   and any admissible  module $P$ over the extended Witt algebra $\widetilde W_d$, we define a $W_d$-module structure on the tensor product $P\otimes M$.     We prove in this paper that  any  simple $W_d$-module that is  finitely generated over the cartan subalgebra is a quotient module of the
$W_d$-module $P \otimes M$ for a finite dimensional simple  $\gl_d$-module $M$ and a   simple $\K_d$-module $P$ that are finitely generated over the cartan subalgebra. We also characterize all    simple $\K_d$-modules and all   simple admissible $\widetilde W_d$-modules that are finitely generated over the cartan subalgebra.

\end{abstract}
\vskip 10pt \noindent {\em Keywords:}  Witt algebra, weight module, irreducible  module

\vskip 5pt
\noindent
{\em 2010  Math. Subj. Class.:}
17B10, 17B20, 17B65, 17B66, 17B68

\vskip 10pt

\section{Introduction}

We denote by $\mathbb{Z}$, $\mathbb{Z}_+$, $\mathbb{N}$ and
$\mathbb{C}$ the sets of  all integers, nonnegative integers,
positive integers and complex numbers, respectively.  All algebras and vector spaces are assumed to be over $\C$.
For any vector space $V$, we denote by $V^*$ the dual space of $V$,
and for any subset $S$ of some abelian group, we denote $S^\star=S\setminus\{0\}$.
Let $d\ge1$ be a fixed integer throughout this study.

Representation theory of Lie algebras is a rich  topic atracting the extensive attention from many mathematicians. Classification of simple modules is an important step in the study of a module category
over an algebra.
Let $\mg$ be a  Lie algebra with a Cartan subalgebra $\mh$. A $\mg$-module $M$ is called  a weight module if the action of
$\mh$ on $M$ is diagonalizable. The other extreme is  that a $\mg$-module $M$ is called $U(\mh)$-torsion-free  if for any nonzero $v\in M$ there is non nonzero $g\in U(\mh)$ such that
$gv=0$. In this paper we will show that simple $W_d$-modules that are finitely generated over $\mh$ are $U(\mh)$-torsion-free.

The classification of simple weight modules for several classes of Lie algebras has been achieved over many years of many mathematicians' endeavor. Here we only mention a few such achievements related to the study of the present paper. 
Finite dimensional simple modules for  finite-dimensional semisimple
Lie algebras were classified by Cartan in 1913, see \cite{Ca}. The classification of simple Harish-Chandra modules over the Virasoro algebra was completed by Mathieu in 1992, see [M1].
 The classification of simple weight
modules over finite dimensiona semisimple Lie algebras with finite-dimensional weight spaces  was obtained in 2000,
see \cite{M2}. Besides $\sl_2$ (and its some deformations), all simple weight modules were constructed only for the aging algebra \cite{LMZ}, 
 the Schrodinger algebra \cite{BL2}, and 
 the Euclidean algebra \cite{BL1}.

The study on $U(\mh)$-torsion-free  $\mg$-modules   has just started with rank one a few years ago, see \cite{N1,N2,TZ1} where    simple   $\mg$-modules that are free $U(\mh)$-module of rank one were classified for  finite dimensional simple
Lie algebras and for Witt algebras $W_d$ and $W^+_d$. This category is not an abelian category. Now we turn to investigate  the category of $\mg$-modules which are finitely generated when
restricted to $U(\mh)$. We consider the Lie algebra $W_d$ of vector fields on an $d$-dimensional torus, that   is, the derivation Lie algebra of the Laurent
polynomial algebra $A_d=\C[x_1^{\pm1},x_2^{\pm1},\cdots,
x_d^{\pm1}]$. The algebra $W_d$ is a natural higher rank
generalization of the Virasoro algebra, which has many applications
to different branches of mathematics and physics (see \cite{M2,
L1,L2,L3,L4,L5}) and at the same time a much more complicated
representation theory.

Over the last two decades, the weight representation theory of Witt algebras was  extensively
studied by many algebraists and physicists; see for example \cite{B, E1, E2, BMZ, GLZ,L3, L4, L5,LZ,LLZ,
MZ2,Z}. In 1986,  Shen defined a class of modules $F^\alpha_b(V)$
over the Witt algebra $W_d$ for  $\a\in\C^d$, $b\in\C$,
and a simple  module $ V$ over the special linear Lie algebra
$\sl_d$, see \cite{Sh}, which were also given by Larsson in 1992,
see \cite{L3}. In 1996, Eswara Rao determined the necessary and
sufficient conditions for   these modules to be irreducible when $V$
is finite dimensional, see \cite{E1, GZ}. Very recently,  Billig and Futorny
\cite{BF}   proved that simple
$W_d$-modules    with finite-dimensional weight spaces are modules of the highest weight type or simple quotient 
  modules from  $F^\alpha_b(V)$.

In the present paper, for  a $\gl_d$-module $V$ and an admissible $\widetilde{W}_d$-module $P$, we define a $W_d$-module $\mathcal{F}(P, V)$ which
generalizes the construction of $F^\alpha_b(V)$. Since there exists a natural
algebra homomorphism from $U(\widetilde{W}_d)$ to the Weyl algebra $\K_d$, each $\K_d$-module can be viewed as an admissible $\widetilde{W}_d$-module.
Let $\C^d$ be the natural $d$-dimensional representation of $\gl_d$ and let $V(\delta_k,k)$ be its
$k$-th exterior power, $k = 0,\cdots,d$. In the paper \cite{LLZ}, it was shown that  when $V$ is a weight module, $\mathcal{F}(P, V)$ is a simple module over
$W_d$ if and only if  $M\not\cong V(\delta_k, k)$ for any $k\in \{0, 1,\cdots, d\}$.

For any $k\in \{0, 1,\cdots, d\}$,
there are $W_d$-module homomorphisms
\begin{equation*}\begin{array}{lrcl}
\pi_{k-1}:& \F(P,V(\delta_{k-1},k-1)) & \rightarrow & \F(P, V(\delta_{k},k)),\\
      & p\otimes v & \mapsto & \sum_{j=1}^{k} D(e_j,0)p\otimes e_j\wedge v,
\end{array}\end{equation*}
for all $p\in P$ and $v\in V(\delta_{k-1}, k-1)$ where $\F(P,V(\delta_{-1},-1)) =0$.
Let $\tilde \L_d(P,k)=\text{Ker} \pi_{k}$ and $ \L_d(P,k)=\text{Im}  \, \pi_{k-1}$.
Then from Theorem 3.5 in [LLZ], the $W_d$-modules $ \L_d(P,k)$ are simple for $k=1,2,\ldots,d$. Moreover,
$$0\subseteq  \L_d(P,k)\subseteq \tilde \L_d(P,k)\subseteq  \F(P, V(\delta_k,k)).$$

In the present paper, we show that if $M$ is   a simple $W_d$-module that is  finitely generated over $\mh$,
then $M$ is a simple quotient of a $W_d$-module $\F( P, V)$ for  a finite dimensional  simple $\gl_d$-module $V$ and a  simple  $\K_d$-module  that is  finitely generated over $\mh$.

The paper is organized as follows. In Section 2, we recall the definition of the Witt algebra $W_d$, the entended Witt algebra $\widetilde{W}_d$, the Wyel algebra $\K_d$, and give the construction of the $W_d$-module $\F( P, V)$ and some of its properties (Propositions 2.2 and Theorem 2.3). In Section 3,
we generalize  the weighting functor $\mathfrak{W}$ introduced in \cite{N2} for finite dimensional simple Lie algebras to the Witt module category  in a slightly different way and discuss some of its properties (Propositions \ref{p3.5}).
 In Section 4, we give the classification of  simple admissible $\widetilde{W}_d$-modules  that are  finitely generated over $\mh$  (Theorem \ref{main1}), and the classification of  simple admissible $\widetilde{W}_d$-modules   that  are  free $U(\mh)$-module of finite rank (Corollary \ref{cor4.8}).  In Section 5, using the technique of covering module established in \cite{BF},
 we complete the classification of  simple $W_d$-modules     that are  finitely generated over $\mh$ (Theorem \ref{thm5.5}). We also show that
 $\L_d(P,i)$ are not   free $U(\mh)$-modules   for $ i=2,3,\ldots,d $.  Using this fact, we obtain the  classification of simple $W_d$-modules   that are free  $U(\mh)$-modules of finite rank (Corollary \ref{cor5.5}).
 In Section 6,  we give a description of  simple $\K_d$-modules   that  are finitely generated  over $\mh$ (Lemma 6.1). We also give an example which shows that there exists a simple $W_d$-module that is a free $U(\mh)$-module of any given positive rank.
 Other main techniques we use in this paper are  Quillen-Suslin Theorem and other results from commutative algebra \cite{L}.

\section{$\W_d$-modules from $\sl_d$-modules}

As usual,   $\ZZ^{d}$ (and other similar notations)
denotes the direct sum of $d$ copies of the additive group $\ZZ$, and we consider it as the additive group of all column vectors with integer entries. For
any $a=(a_1,\cdots, a_d)^T \in \Z_+^d$ and $n=(n_1,\cdots,n_d)^T
\in\C^d$, we denote $n^{a}=n_1^{a_1}n_2^{a_2}\cdots n_d^{a_d}$,
where $T$ means taking transpose of the matrix.
Let $\gl_d$ be the Lie algebra of all $d \times d$ complex matrices, and $\sl_d$ be the
subalgebra of $\gl_d$ consisting of all traceless matrices.  For $1
\leq i, j \leq d$, we use $E_{ij}$ to denote the matrix with $1$
at the $(i, j)$ entry and zeros elsewhere.
 We know that
$$\gl_d=\sum_{1\leq i, j\leq
n}\C E_{i,j}.$$

Let $\mh=\span\{h_{i}\,|\,1\le i\le d-1\}$ where
$h_i=E_{ii}-E_{i+1,i+1}$. 
Let $\Lambda^+=\{\l\in\mh^*\,|\,\l(h_i)\in\Z_+ \text{ for } i=1,2,...,d-1\}$ be the set of dominant weights with respect to $\mh$. For any
$\psi\in \Lambda^+$,  let $V(\psi)$ be  the simple $\sl_d$-module with
highest weight $\psi$. We make $V(\psi)$ into a $\gl_d$-module by
defining the action of the identity matrix $I$ as some scalar
$b\in\mathbb{C}$. We denote the resulting $\gl_d$-module as $V(\psi,b)$.

We fix the vector space $\mathbb{C}^d$ of $d\times 1$ matrices.
Denote its standard basis by $\{e_1,e_2,...,e_d\}$. Let
$(\,\cdot\,|\, \cdot\, )$ be the standard symmetric bilinear form  on $\mathbb{C}^d$
such that $(u | v)=u^Tv\in\mathbb{C}$.

Define the fundamental weights $\delta_i\in\mh^*$ by
$\delta_i(h_j)=\delta_{i,j}$ for all $i,j=1,2,..., d-1$. It is
well known that the module $V(\delta_1, 1)$ can be realized as the
natural representation of $\gl_d$ on $\mathbb{C}^d$ (the matrix
product), which we can write as $E_{ji}e_l=\delta_{li}e_j$.  In
particular,
\begin{equation}(ru^T)v=(u|v)r,\,\,\forall\,\, u,v,r\in \mathbb{C}^d.\end{equation}
The exterior
product $\bigwedge^k(\mathbb{C}^d)=\mathbb{C}^d\wedge\cdots\wedge
\mathbb{C}^d\ \ (k\ \mbox{times})$ is a $\mathfrak{gl}_d$-module
with the action given by $$X(v_1\wedge\cdots\wedge
v_k)=\sum\limits_{i=1}^k v_1\wedge\cdots v_{i-1}\wedge
Xv_i\cdots\wedge v_k, \,\,\forall \,\, v_i\in  \mathbb{C}^d,  X\in \gl_d,$$ and the following
$\gl_n$-module isomorphism is well known:
\begin{equation}\label{dk}{\bigwedge}^k(\mathbb{C}^d)\cong V(\delta_k,
k),\,\forall\,\, 1\leq k\leq d,\end{equation}
where $V(\delta_d,
d)$ is a $1$-dimsional module.
For convenience of late use, we let $V(\delta_0,
0)$ be the 1-dimensional trivial $\gl_d$-module. We set  $\bigwedge^0(\mathbb{C}^d)=\C$ and
$v\wedge a=av$ for any $v\in\C^d, a\in\C$.

\subsection{ Witt algebra $W_d$}
We denote by  $ W_d$ the derivation Lie algebra of the
Laurent polynomial algebra $A_d=\C[x_1^{\pm1}, \cdots,x_d^{\pm1}]$. 
Set $\partial_i=x_i\frac{\partial}{\partial x_i}$ for $i=1,2,\dots,d$ and 
$x^r=x_1^{r_1}x_2^{r_2}\cdots x_d^{r_d}$ for $r=(r_1,r_2,\cdots, r_d)^T\in\mathbb{Z}^d$.
For $u=(u_1,\cdots, u_d)^T \in \mathbb {C}^d$ and $r\in \mathbb{Z}^d$, we denote
$$D(u,r)=x^r\sum_{i=1}^du_i\partial_i\in\W_d.$$ Then we have the Lie bracket
$$[D(u,r),D(v,s)]=D(w,r+s),\ \forall\ u,v\in \mathbb {C}^d, r,s\in \mathbb {Z}^d,$$
where $w=(u | s)v-(v | r)u$. Note that for any $u,v,\xi,\eta\in
\mathbb{C}^d$, both $uv^T$ and $\xi\eta^T$ are $d\times d$ matrices, and
 \begin{equation*}(uv^T)(\xi\eta^T)=(v|\xi)u\eta^T.\end{equation*}
 We know that $\mh=\span\{\partial_1, \partial_2, ... , \partial_d\}$ is the Cartan
 subalgebra of $\W_d$.

It is obvious that $A_d$ admits a natural $W_d$-module structure:
$$D(u,r)x^s=(u|s)x^{r+s}, \,\,\forall\,\,\,u\in \mathbb {C}^d, r,s\in \mathbb {Z}^d.$$
Using this module structure, we can form the semi-direct product Lie algebra $\widetilde{W}_d=W_d\ltimes A_d$,
called the extended Witt algebra. The intertwining Lie bracket between $W_d$ and $A_d$ is given by the
module action, that is,
$$[D(u,r), x^m]=(u|m)x^{r+m}, \,\,\forall\,\,\,u\in \mathbb {C}^d, r,m\in \mathbb {Z}^d.$$
We will denote  by $\mathcal{M}_{W_d}$ the category of $W_d$-modules, by $\mathcal{M}_{\widetilde{W}_d}$ the category of $\widetilde{W}_d$-modules.%, and similarly.

Note that $W_d$ also has a natural module structure over the commutative associative algebra $A_d$: 
 $$ x^sD(u,r)=D(u,r+s), \ \forall \ u\in \mathbb {C}^d, r,s\in \mathbb {Z}^d.$$

\begin{definition} A  $\widetilde{W}_d$-module $P$ is  called admissible if the action of the subalgebra $A_d$ is an associative algebra action, i.e.,
$$x^{0}v=v, \  x^{n+r}v= x^r  x^nv,\ \forall\  r,n\in \Z^d, v\in P.$$
\end{definition}

An admissible $\widetilde{W}_d$-module was called a $(W_d, A_d)$-module in \cite{D}.

\subsection{$\W_d$-modules}
Let $V$ be a $\gl_d$-module and $P$ be an admissible $\widetilde{W}_d$-module.
Let $\F(P, V)=P\otimes V$. We define the  action $\W_d$ and $A_d$ on $\mathcal{F}(P, V)$ as follows:
\begin{equation}\label{2.1}
 D(u,r)(z\otimes y)= D(u,r)z\otimes y+ x^rz \otimes(ru^T) y,
\end{equation}
\begin{equation}\label{2.4}
x^r(z\otimes y)=x^rz\otimes y ,
\end{equation}
where $u\in\mathbb{C}^d$, $r\in\mathbb{Z}^d, z\in P$ and $y\in V$.
We can rewrite (\ref{2.1}) as
\begin{equation}\label{2.5}(x^r\partial_j)(z\otimes y)= (x^r\partial_j)z\otimes y+\sum_{i=1}^dr_i x^r z \otimes(E_{ij} y).\end{equation}
 
\begin{proposition}\label{p} Let $V, V_1, V_2$ be $\gl_d$-modules and $P$ be an admissible $\widetilde{W}_d$-module. Then %The following statements hold:
\itemize
\item[(a).] $\F(P, V)$ is an admissible $\widetilde{W}_d$-module;
\item[(b).] $\F( \F(P, V_1), V_2)\cong \F(P, V_1\otimes V_2)$.
\end{proposition}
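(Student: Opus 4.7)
The plan is to verify (a) by checking all three defining relations of $\widetilde{W}_d$ directly on $\F(P,V)=P\otimes V$, and (b) by showing that the standard associativity map of tensor products is a $\widetilde{W}_d$-module isomorphism.

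For (a), the $A_d$-admissibility is essentially automatic from formula (\ref{2.4}): since $P$ is admissible, $x^0(z\otimes y)=x^0z\otimes y=z\otimes y$ and $x^{n+r}(z\otimes y)=x^{n+r}z\otimes y=x^rx^nz\otimes y=x^rx^n(z\otimes y)$. So the substance lies in checking the two Lie bracket relations. For $[D(u,r),x^m]$, I would apply $D(u,r)x^m$ to a pure tensor $z\otimes y$ using (\ref{2.1}) and (\ref{2.4}), then expand $D(u,r)x^mz$ in $P$ using the admissibility of $P$ as $x^mD(u,r)z+(u|m)x^{r+m}z$; subtracting $x^mD(u,r)(z\otimes y)$ makes the terms $x^mD(u,r)z\otimes y$ and $x^{r+m}z\otimes(ru^T)y$ cancel, leaving $(u|m)x^{r+m}(z\otimes y)$, as required.

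The main calculation—and the only place requiring care—is the Witt bracket $[D(u,r),D(v,s)]=D(w,r+s)$ with $w=(u|s)v-(v|r)u$. Expanding $D(u,r)D(v,s)(z\otimes y)$ via (\ref{2.1}), one gets four terms, one of which involves $D(u,r)x^sz$; rewriting this using the $\widetilde{W}_d$-action on $P$ as $x^sD(u,r)z+(u|s)x^{r+s}z$ and then antisymmetrizing in $(u,r)\leftrightarrow(v,s)$, the ``mixed'' terms cancel and one is left with
\[
[D(u,r),D(v,s)](z\otimes y)=D(w,r+s)z\otimes y+x^{r+s}z\otimes \Bigl((u|s)\,rv^T-(v|r)\,su^T+(u|s)\,sv^T-(v|r)\,ru^T\Bigr)y,
\]
after applying the identity $(ru^T)(sv^T)=(u|s)\,rv^T$ from the excerpt. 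The matrix in the second tensor factor is exactly $(r+s)w^T$ with $w=(u|s)v-(v|r)u$, so the right-hand side equals $D(w,r+s)(z\otimes y)$ by (\ref{2.1}). This is the step I expect to be the most delicate, since one has to group the four surviving matrix products correctly.

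For (b), I would take the usual associativity map $\Phi\colon(z\otimes y_1)\otimes y_2\mapsto z\otimes(y_1\otimes y_2)$, which is a linear isomorphism between $\F(\F(P,V_1),V_2)$ and $\F(P,V_1\otimes V_2)$. The $A_d$-equivariance is immediate from (\ref{2.4}). For the $W_d$-equivariance, expanding $D(u,r)((z\otimes y_1)\otimes y_2)$ twice via (\ref{2.1}) produces three terms
\[
D(u,r)z\otimes y_1\otimes y_2+x^rz\otimes(ru^T)y_1\otimes y_2+x^rz\otimes y_1\otimes(ru^T)y_2,
\]
while applying $D(u,r)$ to $z\otimes(y_1\otimes y_2)$ in $\F(P,V_1\otimes V_2)$ gives the same three terms after using the Leibniz rule $(ru^T)(y_1\otimes y_2)=(ru^T)y_1\otimes y_2+y_1\otimes(ru^T)y_2$ for the $\gl_d$-action on $V_1\otimes V_2$. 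Hence $\Phi$ intertwines the two actions, finishing the proof.
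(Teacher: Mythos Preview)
Your proposal is correct and follows exactly the approach indicated in the paper: the paper states the two identities to be checked for (a) and the map $\varphi$ for (b), then declares the verifications ``straightforward'' and omits them, whereas you have supplied the omitted computations accurately. In particular, your handling of the four matrix terms combining to $(r+s)w^T$ and your use of the Leibniz rule for the $\gl_d$-action on $V_1\otimes V_2$ are precisely the computations the paper suppresses.
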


\begin{proof} (a).  We need to verify that
$$\aligned
D(u,r)D(v, s)(z\otimes y)- D(v, s)D(u,r)(z\otimes y& = D(w, r+s)(z\otimes y);\\
D(u, r) x^m(z\otimes y)-x^m D(u,r)(z\otimes y) & =(u\mid m)x^{m+r}(z\otimes y),
\endaligned$$ 
for all $u,v\in\mathbb{C}^d$, $z\in P, y\in V$, $ r,s, m\in\mathbb{Z}^d$, where $w=(u\,|\,s)v-(v\,|\,r)u$.
This is straightforward and we omit the details.

(b). We define the linear map
\begin{equation*}\begin{array}{crcl}
\varphi: & \F( \F(P, V_1), V_2)      & \to     & \F(P, V_1\otimes V_2),\\ 
         & (z\otimes y_1)\otimes y_2 & \mapsto & z\otimes (y_1\otimes y_2),
\end{array}\end{equation*}
for $z\in P$, $y_1\in V_1$, and $y_2\in V_2$. It is also straightforward to check that
$$\aligned 
\varphi\big( D(u,r)((z\otimes y_1)\otimes y_2)\big) & =D(u,r) \varphi\big((z\otimes y_1)\otimes y_2\big);\\
\varphi\big( x^r((z\otimes y_1)\otimes y_2)\big) & = x^r\varphi((z\otimes y_1)\otimes y_2).
\endaligned$$
So $\F( \F(P, V_1), V_2)\cong \F(P, V_1\otimes V_2)$ as admissible $\widetilde{W}_d$-modules.
\end{proof}

Recall that the classical Weyl  algebra ${\K}_d$ is the  unital  simple associative algebra $\C[x_1^{\pm 1},\cdots,x_d^{\pm 1},\partial_{1},\cdots,\partial_{d}]$. 
It is worthy to remark that any $\K_d$-module can be viewed as an admissible $\widetilde{W}_d$-module 
since $W_d\ltimes A_d$ can be regarded as a Lie subalgebra of $\K_d$ spanned by all 
$x^r\partial_i$ and $x^r$ with $r\in\Z^d$ and $i=1,\cdots,d$. 
Similarly as before, we denote  by $\mathcal{M}_{\K_d}$ the category of $\K_d$-modules.

In \cite{LLZ}, we have studied another module structure on $P\otimes V$ defined as follows:
\begin{equation}\label{Action2}
 (x^{r-e_j}\partial_{j})\circ  (z\otimes y)
=((x^{r-e_j}\partial_{j})z)\otimes y+ \sum_{i=1}^nr_i(x^{r-e_i}z)\otimes E_{ij}(y),
\end{equation}
and $x^r (z\otimes y)=(x^{r}z)\otimes y$
for all $r=(r_1,\cdots,r_d)^T\in \Z^d, z\in P$ and $y\in V.$
Denote this module by $F(P, V)$. 
Note that in \cite{LLZ}, the $W_d$-module $F(P,V)$ is defined only when $P$ is a $\K_d$-module, 
however, the definition is still valid for an admissible $\widetilde{W}_d$-module.

It is easy to see that, $\F(P, V)$ is a weight $\W_d$-module if   $P$ is a weight $\W_d$-module, while $F(P, V)$ is a weight $\W_d$-module if   both $P$ and $V$ are  weight modules. Next we will prove that $\F(P, V)$ and $F(P, V)$ are somewhat the same when $V$ is a weight $\gl_d$-module.

For any $\lambda=(\l_1,\cdots,\l_d)\in \C^d$, we have the automorphism $\tilde{\lambda}$ of ${ {\K}}_d$ defined by 
$$\tilde{\l}(x^{\a})=x^{\a}, \quad \tilde{\l}(\partial_j)=\partial_j- {\lambda}_j,\ \forall\ j=1,2,\ldots,d.$$
Then for any module $P$ over the associative algebra ${{\K}}_d$, we have the new module $P^{\tilde{\lambda}}=P$ with the action 
$$y \circ_{\lambda} p={\tilde{\lambda}}(y) p,\ \forall\ y\in {{\K}}_d,\ p\in P^{\tilde{\lambda}}.$$

If  $V$ is a simple weight $\gl_d$-module, there is a $\lambda\in\C^d$ such that
\begin{equation}\label{weightset}
V=\bigoplus_{\mu\in \Z^d}V_{\lambda+\mu},
\end{equation}
where $V_{\lambda+\mu}=\{  v\in V\mid E_{ii}v=(\lambda_i+\mu_i) v, \ \forall\ i=1, 2, \cdots, d\}$.
We will need the following theorem in the next sections.

\begin{theorem} If $V$ is a simple weight module over $\gl_d$ with decomposition 
(\ref{weightset}) for some $\l\in\C^d$ and $P$ is a module over the associative algebra ${ {\K}}_d$. 
Then the linear  map
$$\aligned
\Phi: \F(P, V) & \rightarrow& F(P^{\tilde{\lambda}}, V)\hskip5pt&\\
p \otimes v_\mu& \mapsto& x^{-\mu}p \otimes v_\mu,&\quad\forall\ v_\mu\in V_{\lambda+\mu},\ p\in P
\endaligned$$ 
is a $W_d$-module isomorphism.%, where $v_\mu\in V_{\lambda+\mu}$ and $p\in P$.
\end{theorem}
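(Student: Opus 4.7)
The plan is a direct verification that $\Phi$ is a linear bijection intertwining the $W_d$-actions. Using the weight decomposition $V=\bigoplus_{\mu\in\Z^d} V_{\lambda+\mu}$, the map $\Phi$ restricts on each summand $P\otimes V_{\lambda+\mu}$ to multiplication by $x^{-\mu}$ on the first tensor factor; since $x^{-\mu}$ is invertible in $\K_d$ with inverse $x^\mu$, this restriction is a linear isomorphism of $P\otimes V_{\lambda+\mu}$ onto itself, so $\Phi$ is a linear bijection on $\F(P,V)$.

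For the intertwining property it suffices to check $\Phi\bigl((x^r\partial_j)(p\otimes v_\mu)\bigr) = (x^r\partial_j)\circ\Phi(p\otimes v_\mu)$ for any $v_\mu\in V_{\lambda+\mu}$, since the elements $D(e_j,r)=x^r\partial_j$ span $W_d$. Using (\ref{2.5}) together with the fact that $E_{ij}v_\mu\in V_{\lambda+\mu+e_i-e_j}$ (because $E_{ij}$ has weight $e_i-e_j$ under the Cartan), the left-hand side expands to
\begin{equation*}
(x^{r-\mu}\partial_j)p\otimes v_\mu+\sum_i r_i\,x^{r-\mu+e_j-e_i}p\otimes E_{ij}v_\mu.
\end{equation*}
For the right-hand side, I would reindex (\ref{Action2}) by $r\mapsto r+e_j$ to obtain the action of $x^r\partial_j$, then expand the $\circ_\lambda$-action using $\tilde{\lambda}(x^r\partial_j)=x^r(\partial_j-\lambda_j)$ together with the Weyl relation $\partial_j x^{-\mu}=x^{-\mu}(\partial_j-\mu_j)$. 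This yields the same main sum plus three scalar corrections to $x^{r-\mu}p\otimes v_\mu$: a $-\lambda_j$ from the twist $\tilde{\lambda}$, a $-\mu_j$ from commuting $\partial_j$ past $x^{-\mu}$, and a $+(\lambda_j+\mu_j)$ from $E_{jj}v_\mu=(\lambda_j+\mu_j)v_\mu$, where the $E_{jj}$ summand appears precisely because the reindexing of (\ref{Action2}) turns $r'_j=r_j+1$ into an extra unit. The three corrections cancel.

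Compatibility with the $A_d$-action $x^s(p\otimes v_\mu)=x^sp\otimes v_\mu$ is immediate, since $x^{-\mu}$ commutes with $x^s$ in $\K_d$. The only real obstacle is the careful tracking of the three scalar corrections $-\lambda_j$, $-\mu_j$, $+(\lambda_j+\mu_j)$; once one recognizes that reindexing (\ref{Action2}) contributes the $E_{jj}v_\mu$ term whose eigenvalue forces exactly the needed cancellation, the verification is purely mechanical. No input from commutative algebra or from later sections is needed for this result.
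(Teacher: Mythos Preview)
Your proposal is correct and follows essentially the same approach as the paper: a direct verification that $\Phi$ is bijective and intertwines the $W_d$-actions, using the two key facts $E_{jj}v_\mu=(\lambda_j+\mu_j)v_\mu$ and $E_{ij}v_\mu\in V_{\lambda+\mu+e_i-e_j}$. The only cosmetic difference is that the paper works directly with the generators $x^{r-e_j}\partial_j$ (so that (\ref{Action2}) applies without reindexing), whereas you reindex $r\mapsto r+e_j$ to work with $x^r\partial_j$; the resulting computations and cancellations are identical.
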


\begin{proof}  Clearly $\Phi$ is a bijection.  Moreover,
for any $r\in\Z^d, j=1,\cdots,d$, $p\in P$ and $v_\mu\in V_{\lambda+\mu}$, we can check that
$$\aligned 
  &(x^{r-e_j}\partial_j)\circ\Phi(p \otimes v_\mu)=(x^{r-e_j}  \partial_j)\circ (x^{-\mu} p\otimes v_\mu)\\
= & x^{r-e_j} (\partial_j-\l_j) x^{-\mu} p\otimes v_\mu+\sum_{i=1}^d r_ix^{r-\mu-e_i}p\otimes E_{ij}v_\mu\\
= & x^{r-e_j-\mu}(\partial_j-\l_j-\mu_j)p\otimes v_\mu+\sum_{i=1}^d r_ix^{r-\mu-e_i}p\otimes E_{ij}v_\mu\\
  %& \hskip30pt +(\sum_{i=1}^n \beta_ix^{\beta-\mu-e_i+e_j}p\otimes E_{ij}v_\mu)+t^{\beta-\mu}p\otimes E_{jj} v_{\mu}\\
= & x^{r-e_j-\mu}\partial_jp\otimes v_\mu-x^{r-e_j-\mu}p\otimes E_{jj}v_\mu+\sum_{i=1}^dr_ix^{r-\mu-e_i}p\otimes E_{ij}v_\mu\\
= & x^{r-e_j-\mu}\partial_jp\otimes v_\mu+\sum_{i=1}^dx^{r-\mu-e_i}p\otimes (r_i-\delta_{ij})E_{ij}v_\mu\\
= & \Phi \big((x^{r-e_j} \partial_j)(p \otimes v_\mu)\big),
\endaligned$$ %where $\beta\in \Z^d$. 
where in the fourth and sixth equalities, we have used the facts
$E_{jj}v_\mu=(\lambda_j+\mu_j) v_\mu$ and $E_{ij}v_\mu\in M_{\lambda+\mu+e_i-e_j}$ respectively.
So $ \Phi$ is a $W_d$-module isomorphism.
\end{proof}

For each $\a\in \C^d$ and $a\in\C$, there is an  admissible $\widetilde{W}_d$-module structure on $A_d$ as follows:
$$D(u,r) x^n=(u\mid n+\a-ar) x^{n+r}, \ x^r x^n=x^{n+r}.$$ We denote this module  over $\widetilde{W}_d$ (or $ {W}_d$) by
$A_d(\a, a)$. Let $V$ be a simple  $\gl_d$-module on which the identity matrix acts as a
complex scalar $b$.
By (\ref{2.1}), the action of $\W_d$ on $\F(A_d(\a, a), V)=A_d(\a,a) \otimes  V$ is defined by
$$D(u,r) ( x^n\otimes v)=(u\mid \a+n-ar)  x^{n+r}\otimes  v+ x^{n+r}\otimes(ru^Tv),$$
where $u\in\C^d,r,n\in\Z^d, v\in V$. In this case,  $\F(A_d(\a,a), V)$ is a weight module over $\W_d$ which was studied by many authors, see \cite{BF, E1, E2,GZ, L1, L2, L3, L4, L5, Sh}.
It was showed that $\F(A_d(\a, 0), V)$ is  a reducible module over $W_d$
if and only if $V$ is isomorphic to the simple finite dimensional module
whose highest weight is a fundament weight $\delta_k$ and $b=k$,
where $k\in\Z$ with $1\leq k\leq d-1$, or $\dim V=1$, $\a\in\Z^d$ and $b\in\{0, d\}$, see \cite{LZ}.
Recently Billig and Futorny \cite{BF} showed that any  irreducible cuspidal $\W_d$-module is isomorphic
 to some irreducible subquotient
of $\F( A_d(\a, 0), M)$.

Since there are a lot of known simple $\gl_d$-modules $V$ and simple admissible $\widetilde{W}_d$-modules $P$, (see \cite{N1, N2, TZ2} and the references therein),  we can actually obtain a lot of new $\W_d$-modules from the above construction of $\F(P, V)$. In the next sections we will study one such class of $\W_d$-modules.

\section{The  weighting functor $\mathfrak{W}$}

We will apply the weighting functor $\mathfrak{W}$ introduced in \cite{N2} to the module categories $\mathcal{M}_{W_d}$, $\mathcal{M}_{\widetilde{W}_d}$ and $\mathcal{M}_{\K_d}$, which will be widely used in the next sections.

For $\a\in\C^d$,  let $I_\a$ be the maximal ideal of  $U(\mh)$ generated by $$\partial_1-\a_1,\dots, \partial_d-\a_d.$$
For a $\W_d$-module $P$ and  $\a\in\C^d$, set $P_{\a}:= P/I_{\a}P$. 
Denote $$\mathfrak{W}^{(\a)}(P):=\bigoplus_{n\in\Z^d}( P_{n+\a}\otimes x^n).$$
Since the module structures of $\mathfrak{W}^{(\a)}(P)$ for distinct $\a$ are similar, 
we study $\mathfrak{W}^{(0)}(P)$ in the rest of this section and denote it by $\mathfrak{W}(P)$ for short.
The general construction will be used in Section 5.
By Proposition 8 in \cite{N2}, we have the following construction.
\begin{proposition} The vector space $\mathfrak{W}(P)$  becomes a  ${\W}_d$-module   under the following action:
\begin{equation}\label{3.3}
D(u,r)\cdot((v+I_{n}P)\otimes x^n):= (D(u,r)v+I_{n+r}P)\otimes x^{n+r}.
\end{equation}
Moreover, if $P$ is an admissible $\widetilde{W}_d$-module, then  $\mathfrak{W}(P)$ becomes an  admissible $\widetilde{W}_d$-module  via
\begin{equation}\label{2.4} 
x^r\cdot((v+I_{n}P)\otimes x^{n}):= (x^rv+I_{n+r}P)\otimes x^{n+r},
\end{equation}
where $ u\in\C^d,n, r\in\Z^d$.
\end{proposition}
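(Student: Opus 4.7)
The plan is to verify this proposition in three steps: (i) well-definedness of the two proposed formulas, (ii) the Lie-bracket relation for $W_d$, and (iii) the admissibility axioms when $P$ is an admissible $\widetilde{W}_d$-module. The decisive point is that both operators $D(u,r)$ and $x^r$ shift the ideal $I_n$ to $I_{n+r}$, which is precisely what makes the grading $\bigoplus_{n\in\Z^d}(P_n\otimes x^n)$ into a genuine module structure.

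First I would establish well-definedness. The subspace $I_n P$ is spanned by elements $(\partial_j-n_j)p$ for $p\in P$ and $j=1,\ldots,d$. From the relation $[\partial_j, D(u,r)] = r_j D(u,r)$ inside $W_d$, one obtains
$$D(u,r)(\partial_j-n_j)p = (\partial_j - n_j - r_j)D(u,r)p \in I_{n+r}P,$$
so $D(u,r)I_n P \subseteq I_{n+r}P$ and the prescribed formula does not depend on the choice of representative $p+I_n P$. The same calculation, using $[\partial_j, x^r]=r_j x^r$ from $\widetilde{W}_d$, yields $x^r I_n P \subseteq I_{n+r}P$.

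Next I would verify the module axioms. Applying the proposed $W_d$-action twice to $(p+I_n P)\otimes x^n$ lands in the summand indexed by $n+r+s$, and the commutator of the two actions returns $([D(u,r),D(v,s)]p + I_{n+r+s}P)\otimes x^{n+r+s}$, which matches one application of $D(u',r+s)$ with $u'=(u\,|\,s)v-(v\,|\,r)u$. For the admissibility statement, the axioms $x^0\cdot\eta=\eta$ and $x^{r+s}\cdot\eta = x^r\cdot x^s\cdot\eta$ on $\mathfrak{W}(P)$ reduce on each summand to the corresponding identities holding on $P$ by hypothesis; the semi-direct bracket $[D(u,r),x^m]=(u\,|\,m)x^{r+m}$ transfers to $\mathfrak{W}(P)$ by the same sort of direct bookkeeping.

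The principal obstacle is the well-definedness step, i.e., pinpointing the commutation relations $[\partial_j, D(u,r)]=r_j D(u,r)$ and $[\partial_j,x^r]=r_j x^r$ and observing that they produce exactly the shift $I_n\mapsto I_{n+r}$ required for the formulas to descend to the quotients $P_n=P/I_n P$. Once this is in place, every remaining verification is a formal matching of indices in the graded sum $\bigoplus_{n\in\Z^d}(P_n\otimes x^n)$.
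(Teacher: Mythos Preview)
Your proposal is correct. The paper does not actually prove this proposition; it simply states it after the sentence ``By Proposition~8 in \cite{N2}, we have the following construction,'' so the argument is outsourced to Nilsson's paper on coherent families. Your direct verification is the natural self-contained argument: the key observation $D(u,r)I_nP\subseteq I_{n+r}P$ (and $x^rI_nP\subseteq I_{n+r}P$) via the root-grading relation $[\partial_j,D(u,r)]=r_jD(u,r)$ is exactly what underlies the cited result in this setting, and the remaining checks are routine. There is nothing to correct.
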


In many cases, the  $\W_d$-module $\mathfrak{W}(P)$  is $0$. For example, if $P$ is a simple  weight $W_d$-module with a weight not in $\Z^d$, one can easily see that $\mathfrak{W}(P)=0$. We also note that $\mathfrak{W}(P)=P$ if
 $P$ is a simple  weight $W_d$-module with a weight  in $\Z^d$.

\begin{remark} The $\W_d$-module $\mathfrak{W}(P)$  is always a weight module since
$$D(u,0)\cdot((v+I_{n}P)\otimes x^n)=(u\mid n)(v+I_{n}P)\otimes x^{n},$$
for all $v\in P$, and
$P_{n}\otimes x^n$ is a weight space for each $n\in\Z^d$.
\end{remark}

The action of the weighting functor $\mathfrak{W}$ on a $W_d$-module homomorphism $f:P_1\to P_2$ is as follows
$$ \aligned 
\mathfrak{W}(f):\quad  \mathfrak{W}(P_1) & \to       \mathfrak{W}(P_2),\\
                  v+I_nP_1        & \mapsto   f(v)+I_nP_2,\ \forall\ v\in P_1.
\endaligned$$
The following properties are easy to verify

\begin{lemma}\label{morphism} 
Let $P_1, P_2\in  \mathcal{M}_{W_d}$ and $f:P_1\to P_2$ be a $W_d$-module homomorphism. 
\begin{itemize}
\item[(a).] If $f$ is onto, so is  $\mathfrak{W}(f)$; 
\item[(b).] If $f$ is an isomorphism, so is  $\mathfrak{W}(f)$.
\end{itemize}
\end{lemma}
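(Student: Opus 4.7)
My plan is to verify both statements by working one weight component at a time, since by construction $\mathfrak{W}(P) = \bigoplus_{n \in \Z^d}(P_n \otimes x^n)$ with $P_n = P/I_n P$, and the map $\mathfrak{W}(f)$ respects this decomposition, sending the $n$th summand of $\mathfrak{W}(P_1)$ into the $n$th summand of $\mathfrak{W}(P_2)$. So both claims reduce to the analogous statements for the induced map $\bar f_n : P_1/I_n P_1 \to P_2/I_n P_2$ for each $n \in \Z^d$.

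For part (a), I would fix $n \in \Z^d$ and take an arbitrary element $w + I_n P_2 \in P_2/I_n P_2$. Since $f$ is surjective, choose $v \in P_1$ with $f(v) = w$; then $\mathfrak{W}(f)(v + I_n P_1) = w + I_n P_2$, proving that $\bar f_n$ is surjective. Summing over $n$ gives surjectivity of $\mathfrak{W}(f)$.

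For part (b), surjectivity is already handled by (a), so only injectivity remains. Suppose $v + I_n P_1$ lies in the kernel of $\mathfrak{W}(f)$, that is, $f(v) \in I_n P_2$. Write $f(v) = \sum_i g_i \cdot w_i$ with $g_i \in I_n$ and $w_i \in P_2$. Since $f$ is bijective, pick $v_i \in P_1$ with $f(v_i) = w_i$. The crucial observation is that $f$, being a $W_d$-module homomorphism, commutes with the action of $U(\mh) \subset U(W_d)$, and in particular with multiplication by every $g_i \in I_n \subset U(\mh)$. Hence $f\bigl(\sum_i g_i v_i\bigr) = \sum_i g_i w_i = f(v)$, and injectivity of $f$ forces $v = \sum_i g_i v_i \in I_n P_1$, i.e.\ $v + I_n P_1 = 0$. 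Thus $\bar f_n$ is injective for every $n$, so $\mathfrak{W}(f)$ is injective.

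The argument is essentially formal; there is no real obstacle. The only point deserving care is the observation in (b) that $f$ commutes with the $U(\mh)$-action, which is what guarantees that the preimage of $I_n P_2$ under $f$ lies in $I_n P_1$; this is exactly where the hypothesis that $f$ is a \emph{module} homomorphism (not merely linear) is used. Equivalently, one could package the whole proof by noting that $\mathfrak{W}$ is a functor and that $\mathfrak{W}(f^{-1})$ serves as a two-sided inverse to $\mathfrak{W}(f)$ when $f$ is invertible, but the direct verification above is the cleanest.
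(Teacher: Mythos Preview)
Your proof is correct. The paper does not actually give a proof of this lemma; it simply declares the properties ``easy to verify'' and moves on. Your componentwise argument is exactly the natural verification the authors have in mind, and your closing remark about functoriality (that $\mathfrak{W}(f^{-1})$ is a two-sided inverse to $\mathfrak{W}(f)$) is an equally valid and arguably slicker way to handle part (b).
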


\begin{remark} Note however, we do not have similar result for monomorphisms,
that is, injectivity of $f$ does not necessarily implies the injectivity of $\mathfrak{W}(f)$.
\end{remark}

\begin{proposition}\label{p3.5} 
For any admissible $\widetilde{W}_d$-module $P$ and $\gl_d$-module $M$, 
we have that $\mathfrak{W}(\F(P, V))\cong \F( \mathfrak{W}(P), V)$ as admissible $\widetilde{W}_d$-modules.
\end{proposition}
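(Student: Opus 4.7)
The plan is to construct an explicit $\widetilde{W}_d$-module isomorphism $\Phi:\mathfrak{W}(\F(P,V))\to \F(\mathfrak{W}(P),V)$. The key simplification comes from inspecting the defining formula (\ref{2.1}) for the $\W_d$-action on $\F(P,V)$: when $r=0$ the correction term $x^rz\otimes(ru^T)y$ vanishes because $ru^T=0$, so the Cartan $\mh$ acts on $\F(P,V)$ only through the $P$-factor. Consequently
$$I_n\F(P,V)=(I_nP)\otimes V,\qquad \F(P,V)/I_n\F(P,V)\cong P_n\otimes V$$
canonically for every $n\in\Z^d$, and the two modules of interest admit matching $\Z^d$-graded vector-space decompositions
$$\mathfrak{W}(\F(P,V))=\bigoplus_{n\in\Z^d}(P_n\otimes V)\otimes x^n,\qquad \F(\mathfrak{W}(P),V)=\bigoplus_{n\in\Z^d}(P_n\otimes x^n)\otimes V.$$

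I would then define $\Phi$ by the obvious weight-space identification
$$\Phi\bigl(((z\otimes y)+I_n\F(P,V))\otimes x^n\bigr):=(z+I_nP)\otimes x^n\otimes y,$$
which is manifestly a weight-preserving linear bijection; what remains is to verify that $\Phi$ intertwines the $\widetilde{W}_d$-action. For $D(u,r)$ this is a short direct computation: on the left, use (\ref{3.3}) to push $D(u,r)$ inside the $I_{n+r}$-quotient and then (\ref{2.1}) to split $D(u,r)(z\otimes y)$ into a $D(u,r)z\otimes y$ piece and an $x^rz\otimes(ru^T)y$ piece; on the right, apply (\ref{2.1}) to $(z+I_nP)\otimes x^n\otimes y$ and then use the formulae from Proposition~3.1 to compute the resulting $D(u,r)$-action and $x^r$-action on $\mathfrak{W}(P)$. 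Both sides produce
$$(D(u,r)z+I_{n+r}P)\otimes x^{n+r}\otimes y+(x^rz+I_{n+r}P)\otimes x^{n+r}\otimes(ru^T)y.$$
The compatibility check for the $A_d$-action via $x^m$ is completely parallel and even shorter.

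The only real pitfall here is conceptual. One might be tempted to decompose $V$ into $\gl_d$-weight spaces and match things weight-by-weight, which is the natural strategy for the alternative construction $F(P,V)$ of (\ref{Action2}) (where the Cartan action genuinely shifts by $E_{jj}$). That strategy leads to shifted ideals $I_{n-\mu}P$ whose bookkeeping does not obviously match on the two sides. In the $\F$-construction no such shift occurs, so the identification is as natural as possible and the verification reduces to the computation above.
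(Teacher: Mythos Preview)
Your proposal is correct and follows essentially the same approach as the paper: both identify $I_n\F(P,V)=(I_nP)\otimes V$ to get $\F(P,V)_n\cong P_n\otimes V$, then define the obvious bijection $(y_n\otimes v)\otimes x^n\mapsto (y_n\otimes x^n)\otimes v$ and verify it is a $\widetilde{W}_d$-module homomorphism. In fact you supply more detail than the paper, which simply asserts the verification is easy.
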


\begin{proof} For $n\in\C^d$,  we have
$$\aligned 
\F(P, V)_n= & \F(P, V)/I_n\F(P, V)=(P\otimes V)/(I_n P)\otimes V\\ 
          = & (P/I_n P)\otimes V=P_n\otimes V.
\endaligned$$ 
So $\mathfrak{W}(\F(P, V))=\bigoplus_{n\in\Z^d}(P_n\otimes V)\otimes x^n$.
%We will simply denote $\F=\F(P, M)$, and $\F_n=\F( \mathfrak{W}(P), M)_n$.
Consider the linear map $\varphi: \mathfrak{W}(\F(P, V))\to \F( \mathfrak{W}(P), V)$ defined by
$$\varphi((y_n\otimes v)\otimes x^n)=(y_n\otimes x^n)\otimes v$$
for any $n\in\Z^d, y_n\in P_n, v\in V$. One can easily verify that $\varphi$ is an admissible $\widetilde{W}_d$-module isomorphism.
\end{proof}

Let us first recall the non-weight $\W_d$-modules
$\Omega(\lambda, a)$ from \cite{TZ1} for any $a\in\C$ and $\lambda=(\l_1,\l_2,\dots,\l_d)^T\in (\C^\star)^d$. 
Denote by $\Omega(\lambda,a) =\C[t_1,\dots,t_d]$, 
the polynomial associative algebra over $\C$ in the commuting indeterminates $t_1,\dots,t_d$. 
For simplicity, if $f(t_1,t_2,...,t_d)\in \C[t_1,\dots,t_d]$,
$r=(r_1,\cdots, r_d)^T\in \C^d$ and $u\in\C^d$, denote
$$\aligned
f(t-r):&= f(t_1-r_1,\cdots,t_d-r_d),\\
(u\mid t+r):&=\sum_{i=1}^d u_i(t_i+r_i).
\endaligned$$

The action of $\W_d$ and $A_d$  on  $\Omega(\lambda,a)$ is defined as follows:
\begin{equation}\label{2.3}\begin{split}
D(u,r) \cdot f(t)& =\lambda^r(u\mid t-ar)f(t-r)\\
x^r\cdot f(t)& =\lambda^rf(t-r),
 \end{split}\end{equation}where $ u\in\C^d,r\in\Z^d$ and $f(t)\in \C[t_1,\dots,t_d]$. It is easy to see that $\Omega(\lambda,a )$ is an admissible $\widetilde{W}_d$-module. It is actually a $\K_d$-module if $a=1$.

 \begin{example} Consider the weight  $\widetilde{W}_d$-module
   $\mathfrak{W}(\Omega(\lambda,a ))$ for any  $a\in\C$ and $\lambda=(\l_1,\dots,\l_d)\in (\C^\star)^d$. Since
   $$\aligned D(u, r)(1+I_n\Omega(\lambda,a ))=&\lambda^r(u\mid t-ar)+I_{n+r}\Omega(\lambda,a )
   \\
   =&\lambda^r(u\mid n+r-ar)+I_{n+r}\Omega(\lambda,a ),\endaligned$$
   where $u\in\C^d, r\in\Z^d$, we see that
   $\mathfrak{W}(\Omega(\lambda,a ))$ is  isomorphic to the module $A_d(0,a-1 )$.
 \end{example}

\section{Simple admissible $\widetilde{W}_d$-modules that are finitely generated   $U(\mh)$-modules}

In this section we determine the category $\widetilde{\mathcal{H}}$ consisting of admissible $\widetilde{W}_d$-modules  that are  finitely generated $U(\mh)$-modules. 

Let  $V$ be a finite dimensional $\gl_d$-module, and $P$ a $\K_d$-module that is a finitely generated  $U(\mh)$-module. Then we have the admissible $\widetilde{W}_d$-module $\F(P, V)$ that is a  finitely generated  $U(\mh)$-module.  %If 
We will show in this section that simple modules in
$\widetilde{\mathcal{H}}$ are exactly the  $\widetilde{W}_d$-modules $\F(P, V)$ for simple  $\K_d$-modules $P$  that is a finitely generated $U(\mh)$-module, and finite dimensional simple $\gl_d$-modules $V$.
% For convenience, denote $\partial_i$ by $t_i$.

Fix any nonzero $M\in\widetilde{\mathcal{H}}$. % which is nonzero simple over $\widetilde{W}_d$.
Denote  $M'=\{v\in M|{\rm ann}_{U(\mh)}(v)\ne 0\}$. It is easy to see that   $M'$ is a $\widetilde{W}_d$-module.

\begin{lemma}\label{torsion} 
The admissible $\widetilde{W}_d$-module  $M$ is torsion-free over $U(\mh)$. 
\end{lemma}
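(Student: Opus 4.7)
The plan is to interpret the statement under the (necessary) hypothesis that $M$ is simple — otherwise the claim fails for examples like $U(\mh)\oplus U(\mh)/I$. Since $M'$ is a $\widetilde{W}_d$-submodule of the simple module $M$, it equals either $0$ or $M$. The first case is exactly the conclusion, so the whole job is to rule out the possibility that $M' = M$, i.e., that every element of $M$ is $U(\mh)$-torsion.

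Assume for contradiction that $M' = M$. Since $U(\mh) \cong \C[\partial_1,\ldots,\partial_d]$ is a commutative Noetherian polynomial ring and $M$ is finitely generated, say by $v_1,\dots,v_n$, one can pick nonzero $g_i \in U(\mh)$ with $g_i v_i = 0$; commutativity forces the product $g := g_1 \cdots g_n$ to annihilate every $U(\mh)$-combination of the $v_i$, so the ideal $J := {\rm ann}_{U(\mh)}(M)$ is a nonzero proper ideal of $U(\mh)$. The crucial step is to show that $J$ is invariant under the natural $\Z^d$-translation on $U(\mh)$: whenever $g(\partial_1,\ldots,\partial_d) \in J$ and $r\in\Z^d$, also $g(\partial_1+r_1,\ldots,\partial_d+r_d) \in J$. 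This rests on admissibility, which makes $x^r$ act as a bijection on $M$ with inverse $x^{-r}$, together with the Lie bracket $[\partial_j, x^r]=r_j x^r$ coming from the $\widetilde{W}_d$-structure. Iterating the latter in the associative algebra $U(\widetilde{W}_d)$ gives $g(\partial)\,x^r = x^r\, g(\partial + r)$ for every $g\in U(\mh)$, so for any $v\in M$, $0 = g(\partial)(x^r v) = x^r g(\partial+r) v$, and invertibility of $x^r$ yields $g(\partial+r)v=0$.

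To finish, I invoke a standard commutative-algebra fact: a nonzero $\Z^d$-shift-invariant ideal of $\C[\partial_1,\ldots,\partial_d]$ must be the unit ideal. Starting from any nonzero $f\in J$ and letting $\Delta_j g := g(\partial+e_j)-g(\partial)$, each $\Delta_j g$ lies in $J$ whenever $g$ does. Applying $\Delta_j^{n_j}$ with $n_j = \deg_{\partial_j} f$ successively for $j=1,\dots,d$ strips off all $\partial_j$-dependence and leaves a nonzero scalar multiple of the top coefficient of $f$; hence $J$ contains a nonzero constant, so $J = U(\mh)$ and $M = 0$, contradicting $M \ne 0$.

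I expect the main subtlety to be the second step, namely verifying shift-invariance of $J$: one must use admissibility precisely (both $x^0=1$ and $x^{r+s}=x^r x^s$) to see that $x^r$ is bijective on $M$, which is what converts the commutation $g(\partial)x^r = x^r g(\partial+r)$ into the desired membership $g(\partial+r)\in J$. The commutative-algebra endgame via iterated finite differences is routine.
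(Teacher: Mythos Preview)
Your argument is essentially the paper's, but you have weakened the lemma by adding an unnecessary simplicity hypothesis. In the paper's setup, $M$ is \emph{any} nonzero module in $\widetilde{\mathcal{H}}$ (the simplicity assumption is imposed only later, after this lemma), and your proposed counterexample $U(\mh)\oplus U(\mh)/I$ is not one: to lie in $\widetilde{\mathcal{H}}$ it would need an admissible $\widetilde{W}_d$-structure, and the very shift-invariance argument you give shows that no summand with nonzero $U(\mh)$-annihilator can carry one.

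The fix is immediate and is exactly what the paper does: run your argument on $M'$ rather than on $M$. Since $U(\mh)$ is Noetherian and $M$ is finitely generated, the submodule $M'$ is itself finitely generated, so $I:=\mathrm{ann}_{U(\mh)}(M')\ne 0$. Admissibility passes to the $\widetilde{W}_d$-submodule $M'$, so $x^{\alpha}$ is bijective on $M'$ and your commutation $g(\partial)x^{\alpha}=x^{\alpha}g(\partial+\alpha)$ gives shift-invariance of $I$; your finite-difference step then forces $I=U(\mh)$, i.e.\ $M'=0$. No simplicity is needed, and no case split $M'\in\{0,M\}$ is required.
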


\begin{proof} Since $U(\mh)=\C[\partial_1,\cdots,\partial_d]$ is Noetherian, then $M'$ is a finitely generated   $U(\mh)$-module. We see that  $0\ne I= {\rm ann}_{U(\mh)}(M')$ is an idea of $U(\mh)$. Clearly, for any $\alpha\in\Z^d$, we have  $x^{\a}M'\subset  M'$, and the linear maps 
$$x^\a:M'\to M', \,\,\, x^{-\alpha}: M'\to M'$$ 
are inverse of each other.
We deduce that  $M'=x^{\a}M'$ for all $\a\in \Z^d$. We see that, $f(\partial)=f(\partial_1,\ldots,\partial_d)\in I$ if and only if $f(\partial-\a)\in I$, for all $\a\in \Z^d$, which implies that $I=U(\mh)$, i.e. $M'=0$. Hence $M$ is $U(\mh)$-torsion free. 
\end{proof}

Now we see that any nonzero modules in $\widetilde{\mathcal{H}}$ are 
finitely generated and torsion free over $U(\mh)$. 
Before continuing, we give a simple property of such modules.

\begin{lemma}\label{cap I_iP} 
Let $P$ be finitely generated torsion free module over $U(\mh)$, and $\{I_i, i\in S\}$ 
is a family of ideals of $U(\mh)$. If  $\bigcap_{i\in S} I_i=0$ then $\bigcap_{i\in S} I_iP=0$.
\end{lemma}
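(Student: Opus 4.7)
The plan is to reduce the statement to the trivial case where $P$ itself is a free $U(\mh)$-module of finite rank. The key observation is that $U(\mh)=\C[\partial_1,\dots,\partial_d]$ is a Noetherian integral domain, so a finitely generated torsion-free module over it embeds into a free module of finite rank.

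First I would construct such an embedding. Let $K$ denote the field of fractions of $U(\mh)$. Since $P$ is torsion-free, the natural map $P\to P\otimes_{U(\mh)} K$ is injective. Because $P$ is finitely generated, $P\otimes_{U(\mh)} K$ is a finite-dimensional $K$-vector space, so after choosing a $K$-basis we obtain an injection $P\hookrightarrow K^n$ for some $n$. Taking generators $v_1,\dots,v_m$ of $P$ and clearing denominators of their images, there is a nonzero $g\in U(\mh)$ such that $gP\subseteq U(\mh)^n$ inside $K^n$. Since multiplication by $g$ is injective on $P$ (torsion-freeness), we get an embedding $\iota\colon P\hookrightarrow U(\mh)^n$ of $U(\mh)$-modules.

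Second I would use the embedding to transport the statement to a free module. For any ideal $J\trianglelefteq U(\mh)$, we have $\iota(JP)\subseteq J\cdot U(\mh)^n=J^n$ (the $n$-fold direct sum). Consequently
\begin{equation*}
\iota\Bigl(\bigcap_{i\in S} I_iP\Bigr)\subseteq \bigcap_{i\in S} I_i^n=\Bigl(\bigcap_{i\in S} I_i\Bigr)^n=0,
\end{equation*}
using that intersections commute with finite direct sums componentwise and the hypothesis $\bigcap_{i\in S} I_i=0$. Since $\iota$ is injective, $\bigcap_{i\in S} I_iP=0$.

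I do not anticipate a serious obstacle; the only substantive step is the embedding of $P$ into a free module of finite rank, which is a standard consequence of the torsion-freeness assumption combined with the fact that $U(\mh)$ is a Noetherian domain. No appeal to the more delicate Quillen--Suslin theorem is needed here, since we only require embedding into (not isomorphism with) a free module.
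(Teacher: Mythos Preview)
Your proof is correct and follows essentially the same approach as the paper: embed $P$ into a free $U(\mh)$-module of finite rank and reduce to the trivial case of a free module. The paper merely cites the embedding as ``a well-known result in Commutative Algebra'' whereas you spell out the standard clearing-denominators argument, but the underlying strategy is identical.
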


\begin{proof}
Since $U(\mh)$ is an integral domain, by a well-known result in Commutative Algebra, 
$P$ can be imbedded in a free $U(\mh)$-module $N$ of finite rank. The lemma follows easily from the fact $\bigcap_{i\in S} I_iN=0$.
\end{proof}

We go on to determine the structures of simple modules in $\widetilde{H}$. 
First we recall the Lie algebra $\T$ from \cite{E2}. 
Let $U$ be the universal enveloping algebra of $\widetilde{W}_d$. Let $\mathcal{I}$ be the two
sided ideal of $U$ generated by 
$$ x^0-1, x^r\cdot x^s-x^{r+s},\ \forall\ r,s\in \Z^d.$$ 
Let $\overline{U}=U/\mathcal{I}.$
Note that an admissible $\widetilde{W}_d$-module is just a $U$-module annihilated by $\mathcal{I}$,
or, just a $\overline{U}$-module.
In particular, $M$ is a $\overline{U}$-module.

Denote $T(u,r)= x^{-r}D(u,r)-D(u,0)+\mathcal{I}\in\overline{U}$. We will identify element in $\widetilde{W}_d$ with its image in $\overline{U}$.  It is easy to verify that
$$\aligned 
&[T(v, s),T(u, r)]\hskip -4pt =\hskip -4pt (u|s)T(v, s)\hskip -4pt -\hskip -4pt (v|  r)T(u,r)\hskip -4pt+\hskip -4ptT((v|r)u-(u|s)v, r+s),\\
&[D(v, 0),T(u, r)]=[x^s,T(u, r)]=0,\ \forall\ u,v\in\C^d, r,s\in\Z^d.
\endaligned$$
We use $\T$ to denote the Lie subalgebra of $\overline{U}$ generated by
operators $T(u, r)$, $u\in\C^d, r\in\Z^d$. Let $J$ denote the subspace  of $\T$ spanned by 
$$T(u; r, m)=T(u,r+m)-T(u,r)-T(u,m),\ \forall\ u\in\C^d, r,m\in\Z^d.$$
It is straightforward to check (or, cf. \cite{E2}) that $J$ is an ideal of $\T$ and  the linear map
\begin{equation}\label{iso-gl_d}
\T/J\to \gl_d(\C); \ T(e_i, e_j)\mapsto E_{j,i},\ \forall\ i,j\in\{1,2,\cdots,d\}
\end{equation}
is a Lie algebra isomorphism. So $z_d=\sum_{i=1}^d T(e_i,e_i)\in Z(\T/J)$. 
We can check that $[z_d, \widetilde{W}_d]=0$ and hence $z_d$ is an endomorphism of the $\widetilde W_d$-module $M$. %If $M$ is simple over $\widetilde{W}_d$, then $z_d$ acts as a scalar on $M$.

We denote the fraction field of $U(\mh)$ by $H$. Since $M$ is a finitely generated torsion-free $U(\mh)$-module, we have the nonzero finite-dimensional vector space $M_H:=H\ot_{U(\mh)}M$ over $H$. 
Define the $U(\mh)$-rank of $M$ as rank$(M)=\dim_H M_H$.
If $\mg$ is a Lie algebra over $\C$, then the tensor product
$\mg_H:=H\otimes_{\C}\mg$ can be considered as a Lie algebra over $H$ by defining $\k(\k_1\ot g)=(\k\k_1\ot g)$ for $\k,\k_1\in H, g\in \mg$.

From now on in this section, we will assume that $M\in\widetilde{\mathcal{H}}$ is a simple $\widetilde{W}_d$-module. 
Thus $z_d$ acts as a scalar on $M$.

\begin{lemma}\label{T/J} 
We have $JM=0$, and further $M$ is  a  module over $\T/J$. %In particular, $\mathfrak{W}(M)$ is a completely reducible $AW$ module.(\bf I want to show it is simple but not succeed till now.}
\end{lemma}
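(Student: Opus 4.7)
The plan is to show $T(u;r,m)M = 0$ for every generator $T(u;r,m)$ of $J$, from which $JM = 0$ and the $\T/J$-module structure both follow. Since $[D(v,0), T(u,r)] = 0$, the operator $T(u;r,m)$ commutes with $U(\mh)$ and therefore descends to a $\C$-linear endomorphism of each fiber $M/I_nM$, $n \in \C^d$. Because the maximal ideals $I_n$ of $U(\mh) = \C[\partial_1,\ldots,\partial_d]$ satisfy $\bigcap_{n\in\C^d}I_n = 0$ (a polynomial vanishing on all of $\C^d$ is zero) and $M$ is finitely generated torsion-free over $U(\mh)$ by Lemma~\ref{torsion}, Lemma~\ref{cap I_iP} gives $\bigcap_{n\in\C^d}I_nM = 0$; so it suffices to show $T(u;r,m)$ descends to the zero operator on each $M/I_nM$.

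Assemble the fibers via the weighting functor into $\mathfrak{W}^{(n)}(M) = \bigoplus_{k\in\Z^d}(M/I_{n+k}M)\otimes x^k$, an admissible weight $\widetilde{W}_d$-module with finite-dimensional weight spaces. A short computation unwinding the weighting-functor action (using that $D(u,0)$ acts by the scalar $(u|n{+}k)$ on the weight-$(n{+}k)$ space modulo $I_{n+k}M$) shows that $T(u;r,m)$ on the weight-$(n{+}k)$ space coincides with the descended action on $M/I_{n+k}M$. Hence the problem reduces to proving that $J$ annihilates $\mathfrak{W}^{(n)}(M)$. For simple admissible weight $\widetilde{W}_d$-modules with finite-dimensional weight spaces, the Billig--Futorny classification realizes them (or their cuspidal subquotients) as $\F(A_d(\alpha,a), V)$ with $V$ a finite-dimensional $\gl_d$-module, and a direct computation using the action formulas of Section~2 gives $T(u,r)(x^n\otimes v) = -a(u|r)x^n\otimes v + x^n\otimes (ru^T)v$, which is manifestly $\Z$-linear in $r$; hence $J$ acts trivially on the simple constituents.

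The main obstacle is lifting from ``$J$ acts trivially on simple subquotients'' to ``$J$ acts trivially on $\mathfrak{W}^{(n)}(M)$'', since a priori extensions could contribute a non-trivial $J$-action. I would close this gap by showing that the $J$-annihilated subspace $N := \{w : Jw = 0\}$ of $\mathfrak{W}^{(n)}(M)$ is a $\widetilde{W}_d$-submodule. The subspace $N$ is automatically $U(\mh)$- and $A_d$-stable; for the $W_d$-part one computes $[D(v,s), T(u;r,m)]$ in $\overline{U}$ by expanding each $[D(v,s), T(u,r')]$ via the Leibniz rule and the Lie bracket formula for $W_d$, and then collecting the $r' \in \{r+m, r, m\}$ terms with signs. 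The cancellations rely on two key algebraic identities: the telescoping $u^{(r+m)} - u^{(r)} - u^{(m)} = (u|s)v$ for $u^{(k)} := (v|k)u - (u|s)v$, and the $\C$-linearity of $T(\cdot, \cdot)$ in the first slot, which lets one split $T(u^{(k)}, \cdot)$ and recombine modulo $J$. The outcome is
\[
[D(v,s), T(u;r,m)] \equiv -(v|r+m)\, x^s\, T(u;r,m) \pmod{A_d \cdot J},
\]
from which $T(u;r,m) D(v,s) w = [T(u;r,m), D(v,s)] w \in A_d \cdot (T(u;r,m)w) + A_d \cdot Jw = 0$ for $w \in N$, giving $D(v,s) N \subseteq N$. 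Since $N$ contains the socle of $\mathfrak{W}^{(n)}(M)$ (where $J$ acts trivially by the classification), iterating the submodule extraction up the composition series yields $N = \mathfrak{W}^{(n)}(M)$, hence $J \cdot \mathfrak{W}^{(n)}(M) = 0$, completing the proof.
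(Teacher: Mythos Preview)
Your overall strategy is close to the paper's, but the ``iteration'' step at the end is a genuine gap. Knowing that $N=\{w:Jw=0\}$ is a $\widetilde W_d$-submodule and contains the socle of $\mathfrak W^{(n)}(M)$ only yields that $J$ acts \emph{nilpotently} on $\mathfrak W^{(n)}(M)$, not that $J$ annihilates it. The induction you sketch does not close: passing to $\mathfrak W^{(n)}(M)/N$ and taking the $J$-annihilator there gives a submodule of the quotient, but its preimage $N'\subseteq\mathfrak W^{(n)}(M)$ only satisfies $JN'\subseteq N$, i.e.\ $J^2N'=0$, not $JN'=0$. So $N'\not\subseteq N$ in general, and the chain you build is a nilpotency filtration, not an equality.

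The paper repairs exactly this by transporting the nilpotency back to the \emph{simple} module $M$. From $J^k\mathfrak W(M)=0$ (which the paper gets directly from Eswara Rao's result \cite[Theorem~2.9]{E2}, rather than via the Billig--Futorny classification) one has $J^kM\subseteq\bigcap_{n\in\Z^d}I_nM=0$, so $J^kM=0$ and in particular $JM\ne M$. The commutator computation---the paper's version is $[D(v,s),T(u;m,r)]\equiv -x^s(u|s)\,T(v;m,r)\pmod{x^sJ}$, which differs from your stated formula but serves the same purpose---then shows that $JM$ is a $\widetilde W_d$-submodule of $M$. Simplicity of $M$ forces $JM=0$. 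The point you missed is that the submodule argument must be applied to $M$ itself, where simplicity is available, not to the weight module $\mathfrak W^{(n)}(M)$, which is typically not simple.
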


\begin{proof} By definition, $M$ is a  $\T$-module. Recall that $\mathfrak{W}(M)$ is a cuspidal admissible $\widetilde{W}_d$-module. Using Theorem 2.9 in \cite{E2} we see that    $J^k \mathfrak{W}(M)=0$ for some positive integer $k$, i.e., $J^k M\subseteq I_nM$ for all $n\in \Z^d$. Since $M$ is a torsion free $U(\mh)$-module of finite rank, we have $\bigcap_{n\in \Z^d} I_nM=0$ by Lemma \ref{cap I_iP}. Thus $J^kM=0$. Hence $JM\ne M$.
In the algebra $\overline{U}$, we have
$$\aligned 
& [D(v, s),T(u, r)] \equiv x^s\big((v|r)T(u, s)\hskip -4pt -\hskip -4pt (u|\hskip -2pt s)T(v,r)\big)\quad \mod x^sJ,\\
& [D(v, s),T(u; m, r)] \equiv - x^s(u|\hskip -2pt s)T(v;m,r)\quad \mod x^sJ,
\endaligned$$
for all $u,v\in\C^d$ and $ m, r,s\in\Z^d$.
So  $JM$ is clearly a $\widetilde W_d$-submodule of $M$. Since $M$ is a simple  $\widetilde W_d$-module, we have $JM=0$. Thus $M$ is a  module over $\T/J$.
\end{proof}

\begin{lemma} \label{V} The $W_d$-module $M$ has a finite dimensional simple submodule over $\T/J\cong\gl_d$.

%The module $M$ has  a finite dimensional simple submodule $V$ over $\T/J\cong\gl_d$.
\end{lemma}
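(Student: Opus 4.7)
The plan is to exploit the fact that $\T/J \cong \gl_d$ acts on $M$ commuting with both $\mh$ and the algebra $A_d$, and then extract a finite-dimensional simple submodule via a descent argument through the localization $M_H$. Since $JM=0$ by Lemma \ref{T/J}, the $\T$-action factors through $\T/J\cong\gl_d$, and the bracket relations $[D(v,0),T(u,r)]=0$ and $[x^s,T(u,r)]=0$ displayed in the excerpt tell us that this $\gl_d$-action is $U(\mh)$-linear (and commutes with $A_d$). Consequently the $\sl_d$-action extends $H$-linearly to $M_H=H\otimes_{U(\mh)}M$, a finite-dimensional $\sl_d\otimes_\C H$-module over the field $H$, and since $M$ is $U(\mh)$-torsion-free by Lemma \ref{torsion}, we get a $\sl_d$-equivariant embedding $M\hookrightarrow M_H$.

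Next, since $\sl_d$ is split semisimple and $H$ has characteristic zero, every finite-dimensional $\sl_d\otimes_\C H$-module is completely reducible (Weyl's theorem over $H$), and each simple summand is absolutely irreducible of the form $V(\lambda)\otimes_\C H$ for some dominant integral weight $\lambda$, with $\sl_d$ acting only on the first tensor factor. Write
$$M_H=\bigoplus_{i=1}^{k} V(\lambda_i)\otimes_\C H.$$
Pick any nonzero $v\in M$ and decompose $v=\sum_i v_i$ with $v_i=\sum_j w_j^{(i)}\otimes h_j^{(i)}$, $w_j^{(i)}\in V(\lambda_i)$, $h_j^{(i)}\in H$. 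Because $\sl_d$ acts trivially on the $H$-factor,
$$U(\sl_d)v\;\subset\;\bigoplus_{i=1}^{k}V(\lambda_i)\otimes_\C\span_\C\bigl\{h_j^{(i)}\bigr\},$$
which is finite-dimensional over $\C$. Hence $M$ is locally finite as an $\sl_d$-module.

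Applying Weyl's theorem over $\C$ to the finite-dimensional $\sl_d$-submodule $U(\sl_d)v$ produces a finite-dimensional simple $\sl_d$-submodule $V\subset M$. Under the isomorphism \eqref{iso-gl_d}, $z_d$ corresponds to $I\in\gl_d$; since $z_d$ acts on $M$ as a scalar (as stated just before the lemma), $V$ is automatically stable under $\C z_d$, so $V$ is a finite-dimensional simple $\gl_d$-submodule of $M$, as required.

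The main obstacle is the descent from $M_H$ to $M$: the $H$-simple decomposition of $M_H$ lives over $H$, whereas we need an actual $\C$-finite-dimensional submodule inside $M$ itself. This is overcome precisely through the absolute irreducibility of finite-dimensional simple $\sl_d$-modules, which forces each $H$-simple summand of $M_H$ to be of the form $V(\lambda)\otimes_\C H$ with $\sl_d$ acting only on the first factor, thereby bounding the $\C$-span of each $\sl_d$-orbit in $M$.
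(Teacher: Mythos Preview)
Your proof is correct and follows the same overall strategy as the paper: pass to the localization $M_H$, decompose it as a module over the split reductive Lie algebra $\gl_d(H)$ (or $\sl_d(H)$), and then descend to produce a finite-dimensional simple $\gl_d(\C)$-submodule inside $M$. The descent step, however, is handled differently. The paper picks a single highest weight vector $v$ in a simple summand $V_1\cong V(\lambda)_H$, clears a denominator so that $v\in M$, and observes that the $\gl_d(\C)$-submodule $U(\gl_d)v$ generated by $v$ is isomorphic to $V(\lambda)$ (since inside $V(\lambda)_H$ the Serre-type relations $F_i^{\lambda(h_i)+1}v=0$ hold). Your argument instead exploits the explicit tensor form $V(\lambda)\otimes_\C H$ of each summand to show that \emph{every} vector in $M$ has finite $\sl_d$-orbit, i.e.\ $M$ is locally $\sl_d$-finite, and then invokes Weyl's theorem over $\C$ to extract a simple piece. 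Your route is a bit more robust in that it does not require locating a highest weight vector in $M$ or checking irreducibility of the cyclic submodule directly; the paper's route is shorter once one accepts the implicit claim that the cyclic module on a highest weight vector of dominant integral weight sitting inside $V(\lambda)_H$ is already simple and finite-dimensional.
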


\begin{proof}
We can  consider $\T$  as a subalgebra of $\End_{U(\mh)}(M)$.
Over the field $H$, $M_H$ is a finite dimensional module over the Lie algebra $\T_H$.
By Lemma \ref{T/J}, we see that $JM=0$ and $J_HM_H=0$.
Hence $M_H$ can be viewed as a finite dimensional $(\T_H/J_H)$-module.
Noticing $\T/J\cong\gl_d(\C)$, we can easily see that $\T_H/J_H\cong\gl_d(H)$ which is a  finite dimensional split reductive Lie algebra over $H$. 
Recall that $z_d$, which corresponds to the identity matrix in $\gl_d(H)$, acts as a scalar on $M$.
By Theorem 8 on Page 79 in \cite{J}, we know that the $\gl_d(H)$-module $M_H$ is completely reducible. Let $V_1$ be a simple  $\gl_d(H)$-submodule of $M_H$.
By Theorem 3 on Page 215 in \cite{J}, we know that $V_1$ is a highest weight module of a dominant weight. Let $v$ be a highest weigh vector of $V_1$. We may assume that $v\in M$. Let $V$ be the $(\T/J)$-submodule of $M$ generated by $v$.
Then $V$ is a finite dimensional irreducible submodule of $M$ over   $\T/J\cong\gl_d(\C)$.
\end{proof}

\begin{lemma} We have the   associative algebra isomorphism 
\begin{equation}\label{iso-iota}
\iota:\K_d\otimes U(\T)\rightarrow \overline{U}, \  \  
\iota(x^r \partial^{\a} \otimes y)=x^r \cdot \prod_{j=1}^d D(\varepsilon_j,0)^{\a_j}\cdot y+I,
\end{equation}
where $r, \a\in\Z^d, y\in U(\T)$.
\end{lemma}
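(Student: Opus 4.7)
The plan is to construct $\iota$ explicitly and then exhibit a two-sided inverse $\phi:\overline U\to\K_d\otimes U(\T)$. First, I would verify that $\iota$ is a well-defined algebra homomorphism. Inside $\overline{U}$, the elements $x^r+\mathcal{I}$ ($r\in\Z^d$) and $\partial_j+\mathcal{I}:=D(\varepsilon_j,0)+\mathcal{I}$ satisfy the defining relations of the Weyl algebra $\K_d$: $x^rx^s=x^{r+s}$ and $x^0=1$ hold in $\overline U$ by the definition of $\mathcal I$; $[\partial_i,\partial_j]=0$ comes from $[D(\varepsilon_i,0),D(\varepsilon_j,0)]=0$; and $[\partial_j,x^r]=r_jx^r$ comes from the bracket in $\widetilde W_d$. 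So we obtain an algebra homomorphism $\K_d\to\overline{U}$, and the Lie subalgebra $\T\subseteq\overline U$ yields $U(\T)\to\overline{U}$. These two images commute because $[D(v,0),T(u,r)]=[x^s,T(u,r)]=0$ as noted before the lemma, hence we obtain the algebra homomorphism $\iota$ of the stated form.

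Next, I would construct the inverse $\phi$. Define a linear map $\tilde\phi:\widetilde{W}_d\to\K_d\otimes U(\T)$ by
$$\tilde\phi(x^r)=x^r\otimes 1,\qquad \tilde\phi(D(u,s))=x^sD(u,0)\otimes 1+x^s\otimes T(u,s),$$
motivated by the identity $D(u,s)=x^s(D(u,0)+T(u,s))$ in $\overline U$. The key verification is that $\tilde\phi$ preserves the Lie bracket. The brackets $[x^r,x^s]=0$ and $[D(u,r),x^s]=(u|s)x^{r+s}$ are immediate; the non-trivial case $[D(u,r),D(v,s)]$ expands in $\K_d\otimes U(\T)$ into four terms and, using that $\T$ commutes with $\K_d$ in the target, reduces to the identity
$$[T(u,r),T(v,s)]=(u|s)(T(v,r+s)-T(v,s))-(v|r)(T(u,r+s)-T(u,r)),$$
which is precisely equivalent to the $\T$-bracket formula recalled just before the lemma. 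By the universal property, $\tilde\phi$ extends to $\hat\phi:U(\widetilde{W}_d)\to\K_d\otimes U(\T)$. Since $\hat\phi(x^0-1)=0$ and $\hat\phi(x^r\cdot x^s-x^{r+s})=0$, the map $\hat\phi$ descends to $\phi:\overline{U}\to\K_d\otimes U(\T)$.

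Finally, $\phi$ and $\iota$ are mutually inverse, as checked on generators. For instance $\phi(\iota(1\otimes T(u,s)))=\phi(x^{-s}D(u,s)-D(u,0))=D(u,0)\otimes 1+1\otimes T(u,s)-D(u,0)\otimes 1=1\otimes T(u,s)$, using $T(u,0)=0$; symmetrically $\iota(\phi(D(u,s)))=x^sD(u,0)+x^sT(u,s)=D(u,s)$ in $\overline U$. The main obstacle is the Lie-bracket verification for $[D(u,r),D(v,s)]$ above: one must carefully expand a commutator of two tensor-product elements, exploit that $\T$ commutes with $\K_d$ in the target to simplify cross terms such as $[x^rD(u,0)\otimes 1,\,x^s\otimes T(v,s)]=[x^rD(u,0),x^s]\otimes T(v,s)$, and then match the four-term expression against $\tilde\phi(D((u|s)v-(v|r)u,r+s))$; the remaining $T$-contributions balance exactly by the $\T$-bracket identity.
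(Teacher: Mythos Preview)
Your proof is correct. Both you and the paper first check that $\iota$ is a well-defined algebra homomorphism using the commutation relations $[D(v,0),T(u,r)]=[x^s,T(u,r)]=0$, and both note the identity $D(u,r)=x^r(T(u,r)+D(u,0))$ to see that $\iota$ is surjective. The difference lies in how injectivity is established. The paper argues via a PBW-type basis: since $\overline{U}$ has a $\K_d$-basis of ordered monomials in the $D(e_i,r)$ with $r\ne 0$, and these are related to the $T(e_i,r)$ by an invertible triangular change, $\overline{U}$ also has a $\K_d$-basis of monomials in the $T(e_i,r)$, matching a basis of $\K_d\otimes U(\T)$. You instead construct an explicit inverse $\phi$ by extending $\tilde\phi(D(u,s))=x^sD(u,0)\otimes 1 + x^s\otimes T(u,s)$ to $U(\widetilde W_d)$ and checking it kills $\mathcal I$. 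Your approach is more self-contained and bypasses any subtlety about PBW bases in the quotient $\overline U$; the paper's approach is shorter once one accepts that passage to $\overline U$ preserves the relevant PBW structure. The bracket verification you flag as the main obstacle is indeed the heart of your argument, and it does reduce exactly to the $\T$-bracket formula as you indicate.
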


\begin{proof} Note that $U(\T)$ is an associative subalgebra of $\overline{U}$. Since the restrictions of $\iota$ on $\K_d$ and $U(\T)$ are well-defined and   homomorphisms of associative algebras, so $\iota$ is well-defined and is an homomorphism of associative algebras. From the definition of $T(u, r)\in T$ for $u\in\C^d, r\in\Z^d$, we have
$D(u,r) =x^{r}(T(u,r)+D(u,0))$, i.e.,
$$\iota(x^{r}\otimes T(u,r)+x^rD(u,0)\otimes 1)=D(u, r),$$
 we can   see that    $\iota$ is  an epimorphism.

 By PBW Theorem we know that $\overline{U}$ has a basis consisting monomials in the variables
 $$D(e_i, r): r\in\Z^d\setminus\{0\}, i\in\{1,2,\cdots, d\}$$
 over $\K_d$. Therefore  $\overline{U}$ has a basis consisting monomials in the variables
 $$T(e_i, r): r\in\Z^d\setminus\{0\}, i\in\{1,2,\cdots, d\}$$
 over $\K_d$. So $\iota$ is injective and hence an isomorphism.
 \end{proof}

Now any (simple) $\overline{U}$-module can be also considered as a (simple) module over $\K_d\otimes U(\T)$ via the isomorphism $\iota$.
The following result is well-known.

\begin{lemma}\label{V'}Let $A, B$ be unital associative algebras and $B$ have a countable basis. \begin{itemize}\item[(a).] If $M$ is  a simple module   over $A\otimes B$ that  contains a simple $B=\C\otimes B$ submodule $V$, then $M\cong W\otimes V$ for  a simple $A$-module $W$. \item[(b).] If $W$ and $V$ are simple modules over $A$ and $B$ respectively, then $W\otimes V$ is a simple  module   over $A\otimes B$.\end{itemize}\end{lemma}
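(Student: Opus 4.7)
The plan is to reduce the statement to two standard ingredients: a version of Schur's lemma applied to the simple $B$-module $V$, and the Jacobson density theorem for the image of $B$ in $\End(V)$. The countability hypothesis on $\dim_{\C} B$ is exactly what guarantees that any simple $B$-module is itself at most countable-dimensional, so that Dixmier's extension of Schur's lemma yields $\End_B(V)=\C$. Once this is in hand, one gets a natural $A$-module structure on $W:=\Hom_B(V,M)$ via $(a\cdot \varphi)(v)=(a\ot 1)\varphi(v)$, and the evaluation map
\begin{equation*}
\mu\colon W\ot V\longrightarrow M,\quad \varphi\ot v\mapsto \varphi(v),
\end{equation*}
is manifestly an $A\ot B$-module homomorphism.

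For part (a), I would first observe that $M$ is the sum of its $B$-submodules of the form $(a\ot 1)V$, $a\in A$, because this sum is stable under $A\ot 1$ and under $1\ot B$, hence an $A\ot B$-submodule containing $V$, forcing equality with $M$ by simplicity. Since each $(a\ot 1)V$ is either $0$ or isomorphic to $V$, the module $M$ is $V$-isotypic. Using $\End_B(V)=\C$, the evaluation map $\mu$ becomes injective on the isotypic component and its image is the entire isotypic part, so $\mu$ is an isomorphism of $A\ot B$-modules. Finally, any nonzero $A$-submodule $W'\subseteq W$ yields a nonzero $A\ot B$-submodule $W'\ot V\subseteq M$; simplicity of $M$ then forces $W'=W$, proving $W$ is simple.

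For part (b), I would take a nonzero element $m\in M'\subseteq W\ot V$, write it as $m=\sum_{i=1}^n w_i\ot v_i$ with $n$ minimal, and observe that minimality forces both $\{w_i\}$ and $\{v_i\}$ to be linearly independent. By Jacobson density applied to $B\to \End(V)$ (valid because $V$ is simple and $\End_B(V)=\C$), there exists $b\in B$ with $bv_1=v_1$ and $bv_i=0$ for $i\ge 2$; acting by $1\ot b$ I extract $w_1\ot v_1\in M'$, so minimality gives $n=1$. Then $(1\ot B)(w_1\ot v_1)=w_1\ot V\subseteq M'$ by simplicity of $V$, and $(A\ot 1)(w_1\ot v)=Aw_1\ot v=W\ot v$ by simplicity of $W$, giving $W\ot V\subseteq M'$.

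The only non-routine point is the appeal to $\End_B(V)=\C$, which is where the countable-basis hypothesis on $B$ is used: a simple $B$-module is a quotient of $B$, hence has countable dimension over $\C$, and a standard argument (if $\theta\in\End_B(V)$ had no eigenvalue, then $\{(\theta-\lambda)^{-1}\mid \lambda\in \C\}$ would be an uncountable $\C$-linearly independent family in the countable-dimensional $\End_B(V)$, a contradiction) gives the result. Everything else is formal manipulation, so I expect no serious technical obstacle beyond correctly invoking these two classical facts.
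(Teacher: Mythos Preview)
Your argument is correct and is precisely the standard proof of this classical fact. The paper itself gives no proof, stating only that ``the following result is well-known''; your use of Dixmier's form of Schur's lemma (to get $\End_B(V)=\C$ from the countable-basis hypothesis) together with Jacobson density is exactly the expected justification.
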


Now we can determine all simple modules in $\widetilde{\mathcal{H}}$.

\begin{lemma}\label{Simple} Let  $V$ be a   simple   $\gl_d$-module, and $P$ a simple $\K_d$-module. Then the admissible $\widetilde{W}_d$-module $\F(P, V)$ is simple.
\end{lemma}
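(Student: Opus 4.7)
The plan is to reduce the simplicity of $\F(P,V)$ to Lemma \ref{V'} by leveraging the algebra isomorphism $\iota:\K_d\otimes U(\T)\to\overline{U}$ from (\ref{iso-iota}) together with the identification $\T/J\cong\gl_d$ from (\ref{iso-gl_d}). Since any admissible $\widetilde{W}_d$-module is the same as a $\overline{U}$-module, it suffices to exhibit $\F(P,V)$ as a simple module over $\K_d\otimes U(\gl_d)$ after first showing that the Lie ideal $J$ acts as zero.

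The key (and really only) computational step is to show $J\cdot\F(P,V)=0$. Using that $P$ is a $\K_d$-module, so that $D(u,r)z=x^rD(u,0)z$ for $z\in P$, the definition (\ref{2.1}) of the $\widetilde{W}_d$-action on $\F(P,V)$ yields, after a short calculation,
$$T(u,r)(z\otimes y)=x^{-r}D(u,r)(z\otimes y)-D(u,0)(z\otimes y)=z\otimes(ru^T)y$$
for all $u\in\C^d$, $r\in\Z^d$, $z\in P$, $y\in V$. Since the right-hand side is bilinear in $(u,r)$, every generator $T(u;r,m)=T(u,r+m)-T(u,r)-T(u,m)$ of $J$ annihilates $\F(P,V)$. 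Thus the $U(\T)$-action on $\F(P,V)$ descends to an action of $U(\T/J)\cong U(\gl_d)$.

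Once $J$ is killed, I would transport the $\overline{U}$-module structure on $\F(P,V)$ back along $\iota^{-1}$ to a $\K_d\otimes U(\gl_d)$-structure and identify the two actions with the original ones: the subalgebra $\K_d\otimes 1$ acts only on the first tensor factor (by (\ref{2.4}) and the easy identity $D(e_j,0)(z\otimes y)=\partial_j z\otimes y$), giving $P$ back its original $\K_d$-module structure, while by the formula above together with the rule $T(e_i,e_j)\mapsto E_{ji}$ of (\ref{iso-gl_d}), $1\otimes U(\gl_d)$ acts only on the second tensor factor via the original $\gl_d$-structure on $V$. Consequently $\F(P,V)\cong P\otimes V$ as $\K_d\otimes U(\gl_d)$-modules. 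Since $U(\gl_d)$ has a countable basis (as $\gl_d$ is finite dimensional), Lemma \ref{V'}(b) applied with $A=\K_d$ and $B=U(\gl_d)$ immediately gives the simplicity of $P\otimes V$, hence of $\F(P,V)$ as an admissible $\widetilde{W}_d$-module. The main obstacle is the clean bookkeeping in the computation of $T(u,r)(z\otimes y)$; once that identification is in place, everything else is a formal application of the two algebra isomorphisms already established in the paper.
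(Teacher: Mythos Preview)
Your proof is correct and follows essentially the same route as the paper: both arguments transport the $\overline{U}$-module structure on $\F(P,V)$ across the isomorphism $\iota$ and then invoke Lemma~\ref{V'}(b). The paper's proof is terse---it simply asserts that $\F(P,V)$, viewed as a $\K_d\otimes U(\T)$-module via $\iota$, is simple by Lemma~\ref{V'}---whereas you supply the explicit verification (the computation $T(u,r)(z\otimes y)=z\otimes(ru^T)y$ and the consequent vanishing of $J$) that makes this assertion transparent.
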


\begin{proof}  Regard $\F(P, V)$ as $\K_d\otimes U(\T)$ module by $\iota$. It is clear from Lemma \ref{V'} that $\F(P,V)$ is a simple $\K_d\otimes U(\T)$  module.
Hence it is also a simple  admissible $\widetilde{W}_d$-module.
\end{proof}

\begin{theorem} \label{main1}Let $M$ be a simple admissible $\widetilde{W}_d$-module   that  is a finitely generated  $U(\mh)$-module  when restricted to $U(\mh)$.
Then  $M\cong \F(P, V)$ for a simple  $\K_d$-module $P$  that is a finitely generated  (torsion-free) $U(\mh)$-module, and  a finite dimensional simple $\gl_d$-module $V$.
\end{theorem}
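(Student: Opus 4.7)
\medskip

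\noindent\textbf{Proof proposal.} The plan is to assemble the theorem directly from the lemmas built up in this section. First I would invoke Lemma \ref{torsion} to conclude that $M$ is torsion-free over $U(\mh)$ and Lemma \ref{T/J} to conclude that $JM=0$, so that the action of $\T$ on $M$ factors through $\T/J\cong\gl_d$. Lemma \ref{V} then produces a finite-dimensional simple $(\T/J)$-submodule $V\subseteq M$, which in particular is a simple $U(\T)$-submodule because $JV=0$. Through the associative algebra isomorphism $\iota:\K_d\otimes U(\T)\to\overline{U}$ of (\ref{iso-iota}), the simple $\widetilde{W}_d$-module $M$ is the same data as a simple $\K_d\otimes U(\T)$-module, containing the simple $U(\T)=\C\otimes U(\T)$-submodule $V$. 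Since $U(\T)$ has a countable basis (it is a quotient of the enveloping algebra of a countable dimensional Lie algebra), Lemma \ref{V'}(a) applies and gives $M\cong P\otimes V$ as $\K_d\otimes U(\T)$-modules for some simple $\K_d$-module $P$.

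Next I would verify that, under this isomorphism, the $\widetilde{W}_d$-action on $P\otimes V$ is exactly the action (\ref{2.1}) defining $\F(P,V)$. The key computation is to express $D(u,r)$ as an element of $\K_d\otimes U(\T)$ via $\iota$: from $T(u,r)=x^{-r}D(u,r)-D(u,0)+\mathcal I$ one gets
\begin{equation*}
D(u,r)=\iota\!\left(\sum_{j=1}^d u_j\,x^r\partial_j\otimes 1\right)+\iota\bigl(x^r\otimes T(u,r)\bigr),
\end{equation*}
and one checks that modulo $J$ the element $T(u,r)$ is bilinear in $(u,r)$ with $T(e_i,e_j)\mapsto E_{j,i}$, so that $T(u,r)$ acts on $V$ as the matrix $ru^T$ (by (2.1)). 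Applying this to $p\otimes v$ reproduces exactly formula (\ref{2.1}), and checking the $A_d$-action is similar. Hence $M\cong \F(P,V)$ as admissible $\widetilde{W}_d$-modules.

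It remains to verify that $P$ and $V$ have the required properties. The module $V$ is already finite-dimensional simple over $\gl_d$ by Lemma \ref{V}. For $P$, fix a basis $v_1,\dots,v_n$ of $V$; then $M\cong P^{\oplus n}$ as $U(\mh)$-modules (since $U(\mh)\subset\K_d$ acts only on the $P$ factor in the tensor decomposition). Hence $P^{\oplus n}$ is finitely generated over $U(\mh)$, and as a quotient $P$ is itself finitely generated over $U(\mh)$. Then $P$ is a nonzero simple $\K_d$-module, and in particular a simple admissible $\widetilde{W}_d$-module in $\widetilde{\mathcal{H}}$, so Lemma \ref{torsion} applied to $P$ gives that $P$ is torsion-free over $U(\mh)$.

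The main obstacle is the middle paragraph: one must carefully match the abstract $\K_d\otimes U(\T)$-module structure coming from Lemma \ref{V'}(a) with the explicit action defining $\F(P,V)$, and the crux is the identification of $T(u,r)\bmod J$ with the rank-one matrix $ru^T$ via the isomorphism (\ref{iso-gl_d}). Everything else is an application of results already in hand.
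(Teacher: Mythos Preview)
Your proposal is correct and follows essentially the same route as the paper: both arguments pass through Lemmas \ref{V} and \ref{V'} to obtain $M\cong P\otimes V$ as $\K_d\otimes U(\T)$-modules, then unwind $\iota$ to check that the induced $\widetilde{W}_d$-action is exactly the formula (\ref{2.1}) defining $\F(P,V)$, and finally read off the $U(\mh)$-properties of $P$ from the identification $M\cong P^{\oplus\dim V}$ as $U(\mh)$-modules. The paper's write-up is terser (it does not re-cite Lemmas \ref{torsion} and \ref{T/J}, since these are already inputs to Lemma \ref{V}), and it deduces torsion-freeness of $P$ directly from that of $M$ rather than by reapplying Lemma \ref{torsion}, but the substance is identical.
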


\begin{proof} By Lemmas \ref{V} and \ref{V'}, there is  a simple  $\K_d$-module $P$ and a finite dimensional simple $\gl_d$-module $V$ so that $M\cong P\otimes V$ as $\overline{U}$-modules, %$\K_d\otimes U(\T)$-modules,
via the Lie algebra isomorphism in \eqref{iso-gl_d} and associative algebra isomorphism in \eqref{iso-iota}. 
More precisely, we can deduce the action of $\widetilde{W}_d$ on $P\otimes V$:
%From the definition of $ \F(P, V)$ and the action
$$\aligned 
D(u,r)(y\otimes v)=&x^{r}(T(u,r)+D(u,0))(y\otimes v)\\
                  =&(x^{r}D(u,0)y)\otimes v+(x^ry)\otimes (ru^T)v, 
\endaligned$$
for all $u\in\C^d, r\in\Z^d,  y\in P, v\in V$, coinciding with 
the definition of the $\widetilde{W}_d$-module $\F(P, V)$.
Hence $M\cong \F(P,V)$.

At last, note that
$$f(\partial)(y\otimes v)=f(\partial)y\otimes v, \quad \forall\ f(\partial)\in U(\mh), y\in P, v\in V.$$ 
Since $M$ is a finitely generated torsion-free  $U(\mh)$-module, so is $P$. 
\end{proof}

\begin{corollary}\label{cor4.8} Let $M$ be a simple admissible $\widetilde{W}_d$-module  that  is a free $U(\mh)$-module of finite rank. Then  $M\cong \F(P, V)$ for a simple $\K_d$-module $P$ that is a free $U(\mh)$-module of finite rank, and  a finite dimensional simple $\gl_d$-module $V$.\end{corollary}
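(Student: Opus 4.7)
The plan is to reduce this corollary to the already-established Theorem 4.7 and then invoke the Quillen--Suslin theorem (which the introduction flags as a main technical ingredient). I proceed in three steps.

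First, I would apply Theorem \ref{main1} directly: since $M$ is in particular a simple admissible $\widetilde{W}_d$-module finitely generated over $U(\mh)$, we get $M \cong \F(P, V)$ for some simple $\K_d$-module $P$ that is a finitely generated torsion-free $U(\mh)$-module and some finite dimensional simple $\gl_d$-module $V$. So the content of the corollary is really just the extra statement that \emph{this} $P$ must be free of finite rank over $U(\mh)$, under the hypothesis that $M$ is.

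Second, I would analyze the $U(\mh)$-module structure of $\F(P,V) = P \otimes V$. From the defining formula \eqref{2.1}, setting $r = 0$ gives $D(u,0)(z \otimes y) = D(u,0)z \otimes y$ because $ru^T = 0$. Hence $\mh$ (and so the whole of $U(\mh) = \C[\partial_1,\dots,\partial_d]$) acts only on the first tensor factor, and as $U(\mh)$-modules we have
\begin{equation*}
M \;\cong\; \F(P, V) \;\cong\; P^{\oplus \dim V}.
\end{equation*}
Since $M$ is free of finite rank over $U(\mh)$ by hypothesis, $P^{\oplus \dim V}$ is a free $U(\mh)$-module of finite rank, and hence $P$ is a direct summand of a finitely generated free module, i.e.\ $P$ is a finitely generated projective $U(\mh)$-module.

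Third, I would invoke Quillen--Suslin: every finitely generated projective module over the polynomial ring $\C[\partial_1,\dots,\partial_d]$ is free. Therefore $P$ is free of finite rank over $U(\mh)$, completing the proof. There is no real obstacle here beyond recognizing that the $\mh$-action on $\F(P,V)$ touches only the $P$-factor and that Quillen--Suslin upgrades ``projective'' to ``free'' over $U(\mh)$; the substantive work was already done in Theorem \ref{main1}.
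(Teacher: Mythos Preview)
Your proof is correct and follows essentially the same approach as the paper: apply Theorem~\ref{main1}, observe that $M\cong P^{\oplus\dim V}$ as $U(\mh)$-modules (since $\mh$ acts only on the first tensor factor), deduce that $P$ is finitely generated projective, and conclude by Quillen--Suslin. The paper's proof is more terse but identical in substance.
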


\begin{proof}From the proof of the above theorem, we only need to show the $P$ is also a free $U(\mh)$-module. In fact we have $M\cong P^m$ as $U(\mh)$ modules where $m=\dim V$. Then $P$ is a finitely generated projective module over $U(\mh)$, which from Quillen-Suslin Theorem is free.\end{proof}

\begin{corollary}Any admissible $\widetilde{W}_d$-module that is a finitely generated  $U(\mh)$-module has a finite composition length as $\widetilde{W}_d$-module.\end{corollary}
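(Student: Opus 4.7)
The plan is to introduce a length function on the category $\widetilde{\mathcal{H}}$ that is additive on short exact sequences and strictly positive on every nonzero object; the composition length of any $M\in\widetilde{\mathcal{H}}$ will then be bounded by the value of this function at $M$.

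First I would verify that $\widetilde{\mathcal{H}}$ is closed under submodules and quotients. The admissibility conditions $x^{0}=1$ and $x^{r+s}=x^r x^s$ automatically transfer to any $\widetilde{W}_d$-submodule or quotient, and finite generation over the Noetherian ring $U(\mh)=\C[\partial_1,\ldots,\partial_d]$ is inherited by submodules and by quotients. Hence every subquotient of $M$ still belongs to $\widetilde{\mathcal{H}}$, and Lemma \ref{torsion} then forces every such nonzero subquotient to be torsion-free as a $U(\mh)$-module.

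Let $H$ denote the fraction field of the integral domain $U(\mh)$, and define $\rank(N):=\dim_H(H\otimes_{U(\mh)}N)$ for $N\in\widetilde{\mathcal{H}}$. This is finite because $N$ is finitely generated, and strictly positive whenever $N\neq 0$ because $N$ is then torsion-free. Since $H$ is a localization of $U(\mh)$, the functor $H\otimes_{U(\mh)}-$ is exact, so $\rank$ is additive on short exact sequences in $\widetilde{\mathcal{H}}$.

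Given any strictly ascending chain $0=M_0\subsetneq M_1\subsetneq\cdots\subsetneq M_n=M$ of $\widetilde{W}_d$-submodules, each successive quotient $M_i/M_{i-1}$ is nonzero and lies in $\widetilde{\mathcal{H}}$, hence has rank at least one. Additivity of $\rank$ then forces $n\leq \rank(M)<\infty$, which bounds the length of every such chain and yields a composition series of finite length. The only point requiring any care is the closure of $\widetilde{\mathcal{H}}$ under quotients, specifically the persistence of the admissibility relations, but this is transparent from the defining identities, and I do not anticipate any genuine obstacle in the argument.
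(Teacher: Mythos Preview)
Your argument is correct and follows essentially the same route as the paper: both use Lemma~\ref{torsion} to see that every nonzero subquotient in $\widetilde{\mathcal{H}}$ is $U(\mh)$-torsion-free, and then bound the length of any chain of $\widetilde{W}_d$-submodules by the $U(\mh)$-rank of $M$. The paper's proof is terser and leaves the closure of $\widetilde{\mathcal{H}}$ under subquotients and the additivity of rank implicit, whereas you spell these out explicitly, but the underlying idea is identical.
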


\begin{proof} Note that any admissible $\widetilde{W}_d$-module $M$ that is a finitely generated   $U(\mh)$-module is $U(\mh)$ torsion-free. Since $U(\mh)$ is a Noetherian integral domain, any $U(\mh)$-submodules of $M$ is finitely generated as $U(\mh)$-module. Hence the length of any composition series of $M$ cannot exceed the rank of $M$ as $U(\mh)$-module. Thus $M$ has a finite composition length as $\widetilde{W}_d$-module.\end{proof}

\section{Simple $\W_d$-modules   that are  finitely generated   $U(\mh)$-modules}

In this section we study the category ${\mathcal{H}}$ consisting of $\W_d$-modules that are finitely generated   $U(\mh)$-modules.  Let $\mathcal{W}$ be the category  consisting of weight $\W_d$-modules with finite dimensional weight spaces.
Corollary \ref{p3.5} tells us that there is a close link between these two categories. 
We will generalize  the concept of the covering module established in \cite{BF} to the category ${\mathcal{H}}$.

In this section, we always fix a nontrivial $W_d$-module $M\in\mathcal{H}$.

\begin{lemma}\label{torsion-1} 
Any nontrivial simple $W_d$-module $M\in\mathcal{H}$ is torsion-free over $U(\mh)$.
\end{lemma}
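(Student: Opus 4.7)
The plan is to parallel the proof of Lemma~\ref{torsion} for admissible $\widetilde{W}_d$-modules, replacing the argument based on invertibility of the $x^r$ operators (which are absent here) by a weighting-functor argument combined with the Billig--Futorny classification of simple Harish-Chandra $W_d$-modules.

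First I would introduce the torsion subspace $M' := \{v \in M : \operatorname{ann}_{U(\mh)}(v) \ne 0\}$ and show that it is a $W_d$-submodule. The key input is the commutation relation $g(\partial) D(u,r) = D(u,r) g(\partial + r)$ in $U(W_d)$: if $g(\partial)v = 0$ with $0 \ne g \in U(\mh)$, then $g(\partial - r) D(u,r) v = D(u,r) g(\partial) v = 0$, and since $g(\partial - r) \ne 0$, we get $D(u,r)v \in M'$. By the simplicity of $M$, either $M' = 0$ (in which case $M$ is torsion-free and we are done) or $M' = M$; I assume the latter towards a contradiction. Since $M$ is finitely generated over the commutative Noetherian integral domain $U(\mh)$, the product of the annihilators of a finite generating set is nonzero and annihilates all of $M$, so the annihilator $I := \operatorname{ann}_{U(\mh)}(M)$ is nonzero, and $S := V(I)$ is a proper Zariski closed subset of $\C^d$.

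Next I invoke the weighting functor from Section~3. For each $\alpha \in \C^d$, the module $\mathfrak{W}^{(\alpha)}(M)$ is a weight $W_d$-module with $\mh$-support $(\alpha + \Z^d) \cap S$ and with weight multiplicities uniformly bounded by the minimal number of $U(\mh)$-generators of $M$. Pick any $\alpha_0 \in S$; then $\mathfrak{W}^{(\alpha_0)}(M) \ne 0$, and for any nonzero weight vector $v \in \mathfrak{W}^{(\alpha_0)}(M)$, a Zorn argument on the cyclic submodule $W_d \cdot v$ yields a simple quotient $N$ in which the image of $v$ is nonzero. Then $N$ is a simple weight $W_d$-module with uniformly bounded finite-dimensional weight multiplicities, so by the Billig--Futorny classification \cite{BF} and the fact that nontrivial simple highest-weight $W_d$-modules have unbounded weight multiplicities, $N$ must be either the trivial module or cuspidal with full-coset support.

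If some $N$ obtained this way is cuspidal, then $\operatorname{Supp}(N) = \alpha_0 + \Z^d \subseteq S$; since $\alpha_0 + \Z^d$ is Zariski dense in $\C^d$ and $S$ is closed, we would get $S = \C^d$, a contradiction. Otherwise every such $N$ is trivial, so $\mathfrak{W}^{(\alpha_0)}(M)$ has no nonzero $\mh$-weight; in particular, $\alpha_0$ itself (realized at $n=0$) must equal $0$, and since $\alpha_0 \in S$ was arbitrary, $S = \{0\}$. Then $\sqrt{I} = I_0$, so $M$ is annihilated by some power $I_0^k$, making it a finitely generated module over the Artinian ring $U(\mh)/I_0^k$ and therefore finite-dimensional over $\C$. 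But $W_d$ is a simple infinite-dimensional Lie algebra, so its only finite-dimensional simple representation is the trivial one, contradicting the nontriviality of $M$. The main obstacle is the appeal to the Billig--Futorny classification, specifically the identification of a nontrivial simple Harish-Chandra $W_d$-module with uniformly bounded weight multiplicities as cuspidal with full-coset support; the rest of the argument is essentially formal once this input is available.
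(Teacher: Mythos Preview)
Your proposal is correct and follows essentially the same strategy as the paper: both arguments reduce to showing that if $\operatorname{ann}_{U(\mh)}(M)\ne 0$, then the weighting modules $\mathfrak{W}^{(\alpha)}(M)$, combined with the Billig--Futorny classification of bounded Harish-Chandra $W_d$-modules, force the vanishing locus of the annihilator to be all of $\C^d$ (contradiction) or $\{0\}$. The only notable difference is the endgame: the paper concludes by observing that $I_0M=\bigcap_{\alpha}I_\alpha M$ is a proper nonzero $W_d$-submodule, whereas you pass from $\sqrt{I}=I_0$ to finite-dimensionality of $M$ and invoke that $W_d$ has no nontrivial finite-dimensional simple modules---this is a cleaner way to finish and in fact fills a small gap in the paper's version, which never explains why $I_0M\ne 0$.
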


\begin{proof} Recall the $W_d$-submodule  $M'=\{v\in M|{\rm ann}_{U(\mh)}(v)\ne 0\}$.  Suppose that $M$ is not torsion-free, i.e., $M'\neq0$. Then $M= M'$ and $J={\rm ann}_{U(\mh)}(M)\ne 0$ since $M$ is finitely generated over $U(\mh)$. 
Recall that $I_\a$ is the maximal ideal of $U(\mh)$ generated by $\partial_i-\a_i, i=1,\cdots,d$.

We claim that $I_\a M=M$ for all $\a\ne 0$. Otherwise, say $I_{\a_0}M\neq M$ for some $\a_0\ne 0$. 
Consider the the Harish-Chandra $W_d$-module defined at the beginning of Section 3, i.e., 
$\mathfrak{W}^{(\a_0)}(M)=\bigoplus_{n\in \Z^d} (M/I_{n+\a_0}M)\otimes x^n$.
From the irreducible Harish-Chandra module theory (cf. \cite{BF}), we know that 
$M/I_{n+\a_0}M\ne 0$ for all $n+\a_0\neq0$. Hence $J\subseteq I_{n+\a_0}$ for all $n+\a_0\neq0$, 
for otherwise $I_{n+\a_0}M=(I_{n+\a_0}+J)M=U(\mh)M=M$.
We have $J\subseteq \bigcap_{n\in\Z^d\setminus\{-\a_0\}} I_{n+\a_0}=0$, a contradiction.

Since $M$ is a finitely generated $U(\mh)$-module, it has a maximal $U(\mh)$-submodule $K$. 
Then we have $M/K\cong U(\mh)/I_{\a_1}$ for some $\a_1\in\C^d$ and $I_{\a_1}M\subseteq K\ne M$, 
forcing $\a_1=0$. So we have proved that $I_0M \neq M$. Note that $D(u, r)I_\alpha=I_{\alpha+r}D(u, r)$. Then $\bigcap_{\a\in \C^d}I_{\a}M=I_0M$ becomes a proper nonzero $W_d$-submodule of $M$, a contradiction. Thus $M$ is torsion-free over $U(\mh)$.
 \end{proof}

%Let $M\in  {\mathcal{H}}$ be a simple $\W_d$-module  that is a finitely generated  $U(\mh)$-module when restricted to $U(\mh)$.
Now consider $W_d$ as the adjoint $W_d$-module. We can make the tensor product $\W_d$-module $W_d\otimes M$ into an admissible $\widetilde{W}_d$-module by defining
$$x^s(D(u, r)\otimes y)=D(u, r+s)\otimes y,\ \forall\ u\in\C^d, r, s\in\Z^d.$$
For  $u\in \C^d, r\in\Z^d, y\in M$, we define $D(u, r)\boxtimes y\in {\text{Hom}}_{\C}(A_d,M)$ as
$$(D(u, r)\boxtimes y)(x^s)=D(u, r+s) y,\ \forall\ s\in\Z^d.$$
Denote
$$W_d\boxtimes M=\span\{w\boxtimes y\ |\ w\in W_d, y\in M\}\subset {\text{Hom}}_{\C}(A_d,M).$$

We define  the canonical linear map
$$\psi: W_d\otimes M\to W_d\boxtimes M; \quad w\otimes y\mapsto w\boxtimes y.$$
It is easy to see that the kernel of $\psi$ is a $\widetilde{W}_d$-submodule of the tensor module $W_d\otimes M$.
Thus we can make $W_d\boxtimes M$ into an admissible $\widetilde{W}_d$-module vis $\psi$, which is isomorphic to $(W_d\otimes M)/\ker\psi$.
As in \cite{BF}, we call this admissible $\widetilde{W}_d$-module as {\it the cover of} $M$. 
%, denoted by  $\widehat{M}$.
The action of $\widetilde{W}_d$ on $W_d\boxtimes M$ can be written explicitly as follows:
$$\aligned
& D(u,r)(D(v,s)\boxtimes y)=[D(u,r),(D(v,s)]\boxtimes y+D(v,s)\boxtimes D(u,r)y\\
& x^r(D(v,s)\boxtimes y)=D(v,r+s)\boxtimes y,\ \forall\ u,v\in\C^d, r,s\in\Z^d, y\in M.
\endaligned$$

The following result is easy to verify.

\begin{lemma}\label{cover}  
The linear map
$$\aligned
\pi: \,\,&W_d\boxtimes M & \to\ \  & M ,\\
         &\hskip5pt  w\boxtimes y & \mapsto\ \  & wy,\quad \forall\ w\in W_d, y\in M
\endaligned$$ 
is a $W_d$-module epimorphism.
\end{lemma}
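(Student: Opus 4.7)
The plan is to verify three points in order: that the prescription $w \boxtimes y \mapsto wy$ descends well from $W_d \otimes M$ to the quotient $W_d \boxtimes M \cong (W_d \otimes M)/\ker\psi$, that the resulting map respects the $W_d$-action, and that it is surjective.

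First, I would handle well-definedness. Given $\xi = \sum_i D(u_i, r_i) \otimes y_i \in \ker\psi$, the associated function $\sum_i D(u_i, r_i) \boxtimes y_i \in \Hom_{\C}(A_d, M)$ is identically zero, so evaluating it at $x^0 = 1 \in A_d$ gives
\[
0 = \sum_i \bigl(D(u_i, r_i) \boxtimes y_i\bigr)(x^0) = \sum_i D(u_i, r_i) y_i,
\]
which is exactly the value the naive map $w \otimes y \mapsto wy$ assigns to $\xi$. Thus $\pi$ is well-defined on the quotient $W_d \boxtimes M$.

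Next, for $W_d$-linearity, I would use the explicit $\widetilde{W}_d$-action on $W_d \boxtimes M$ recorded in the excerpt to compute, for $u, v \in \C^d$ and $r, s \in \Z^d$,
\[
\pi\bigl(D(u,r)(D(v,s) \boxtimes y)\bigr) = [D(u,r), D(v,s)]y + D(v,s)D(u,r)y = D(u,r)D(v,s)y = D(u,r)\pi(D(v,s) \boxtimes y),
\]
where the middle equality is just the bracket identity in $W_d$; extending linearly in the first factor then yields that $\pi$ is a $W_d$-module homomorphism.

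Finally, for surjectivity, the image $\pi(W_d \boxtimes M) = W_d M$ is a $W_d$-submodule of $M$, and in the Section~5 setting where $M$ is taken to be simple, the nontriviality assumption rules out $W_d M = 0$ and forces $W_d M = M$. I do not anticipate any real obstacle: the only point that requires a second of thought is the well-definedness step, which hinges on the observation that evaluating the function $w \boxtimes y$ at $x^0$ returns $wy$, so that no further identification is needed to descend $\pi$ to the quotient.
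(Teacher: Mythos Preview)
Your proposal is correct and is exactly the verification the paper omits: the paper's own ``proof'' is simply the sentence ``The following result is easy to verify,'' so you have filled in precisely what was left to the reader. Your key observation for well-definedness---that evaluating $w\boxtimes y$ at $x^0$ recovers $wy$---is the right one, and your homomorphism and surjectivity checks are straightforward; note only that the section-level hypothesis on $M$ is ``nontrivial'' rather than ``simple,'' so strictly speaking surjectivity as stated uses the simplicity assumed in the application (Theorem~\ref{thm5.5}), which you correctly flag.
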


The following result gives an interesting property on commutativity of the weighting functor 
$\mathfrak{W}$ and   $W_d\boxtimes$.
 
\begin{lemma}\label{iso} 
As admissible $\widetilde{W}_d$-modules, $W_d\boxtimes {\mathfrak{W}(M)}\cong \mathfrak{W}(W_d\boxtimes {M})$.
\end{lemma}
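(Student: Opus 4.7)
The plan is to construct an explicit admissible $\widetilde{W}_d$-module isomorphism $\phi\colon W_d\boxtimes\mathfrak{W}(M)\to\mathfrak{W}(W_d\boxtimes M)$ via the formula
$$\phi\bigl(D(v,s)\boxtimes((m+I_nM)\otimes x^n)\bigr) = \bigl(D(v,s)\boxtimes m + I_{n+s}(W_d\boxtimes M)\bigr)\otimes x^{n+s},$$
with candidate inverse $\phi^{-1}$ sending $(D(v,s)\boxtimes m + I_k(W_d\boxtimes M))\otimes x^k$ to $D(v,s)\boxtimes((m+I_{k-s}M)\otimes x^{k-s})$. Both formulas respect the $\mh$-weight grading (weight $k = n+s$ on either side) and are mutually inverse on generators. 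The key computation driving every verification is the commutation $[\partial_i, D(v,s)] = s_iD(v,s)$, which yields
$$\partial_i(D(v,s)\boxtimes y) = D(v,s)\boxtimes(\partial_i + s_i)y$$
in either cover, and hence the basic containment $D(v,s)\boxtimes I_nM \subseteq I_{n+s}(W_d\boxtimes M)$. This ensures independence of the formulas from the choice of representative $m\in M$ modulo $I_nM$.

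Verifying $\widetilde{W}_d$-linearity of $\phi$ is a routine exercise: for the $A_d$-action, use $x^r(D(v,s)\boxtimes y) = D(v, r+s)\boxtimes y$ together with formula (\ref{2.4}); for the $W_d$-action, combine the action formula (\ref{3.3}) on $\mathfrak{W}$, the Witt bracket $[D(u,r), D(v,s)] = D((u|s)v - (v|r)u, r+s)$, and the Leibniz rule of the cover module. Surjectivity of $\phi^{-1}$ (equivalently, of $\phi$) is immediate from its action on generators. Well-definedness of $\phi^{-1}$ on the $I_k$-quotient follows from the dual identity $D(v,s+t)(\partial_i - c)m = (\partial_i - (c+s_i+t_i))D(v,s+t)m$ (a consequence of $[\partial_i, D(v,s+t)] = (s_i+t_i)D(v,s+t)$): if $\eta = \sum_i(\partial_i - k_i)\zeta_i \in I_k(W_d\boxtimes M)$, then evaluating $\phi^{-1}(\eta\otimes x^k)$ at any $x^t\in A_d$ yields a sum of terms in $I_{k+t}M$, hence vanishes in $\mathfrak{W}(M)$.

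The main obstacle is injectivity of $\phi^{-1}$, equivalently, the fact that $\phi$ descends through $\ker\psi'$ for the canonical surjection $\psi'\colon W_d\otimes\mathfrak{W}(M)\to W_d\boxtimes\mathfrak{W}(M)$. Concretely, one must prove the converse of the basic containment: if $\omega = \sum_j D(v_j,s_j)\boxtimes m_j\in W_d\boxtimes M$ satisfies $\omega(x^t) = \sum_j D(v_j,s_j+t)m_j \in I_{k+t}M$ for every $t\in\Z^d$ (using the embedding $W_d\boxtimes M\hookrightarrow\text{Hom}_\C(A_d, M)$), then $\omega\in I_k(W_d\boxtimes M)$. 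The plan is to lift the family of pointwise $I_{k+t}$-inclusions to a single global expression $\omega = \sum_i(\partial_i - k_i)\xi_i$ inside $W_d\boxtimes M$, exploiting the structural properties of $M\in\mathcal{H}$ (finite generation and, after Lemma \ref{torsion-1}, torsion-freeness over $U(\mh)$) to guarantee that enough room exists in the cover module to carry out such a lift uniformly in $t$.
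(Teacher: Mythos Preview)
Your map $\phi$ is exactly the paper's map $\gamma$, and the overall strategy is the same. One organizational difference: the paper opens by recording the \emph{two-sided} identity
\[
I_n(W_d\boxtimes M)=\sum_{s\in\Z^d}W_d(s)\boxtimes I_{n-s}M,
\]
which follows at once from your commutation formula $(\partial_i-n_i)(D(v,s)\boxtimes y)=D(v,s)\boxtimes(\partial_i-(n_i-s_i))y$; you only state one inclusion. The paper then handles well-definedness and bijectivity simultaneously via a single chain of ``if and only if'' statements for the vanishing of a generic element on each side, rather than building $\phi$ and $\phi^{-1}$ separately.

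The step you flag as the main obstacle --- that $\omega(x^t)\in I_{k+t}M$ for every $t\in\Z^d$ forces $\omega\in I_k(W_d\boxtimes M)$ --- is precisely the final equivalence in the paper's chain, and the paper likewise asserts it without a detailed argument: it rewrites the hypothesis as $\omega(x^\a)\in\sum_s W_d(s+\a)(I_{n-s}M)$ and then simply claims the equivalence with $\omega\in\sum_s W_d(s)\boxtimes I_{n-s}M$. So your proposal is at the same level of completeness as the paper's own proof. That said, your closing paragraph is only a plan, not an argument: you have not explained how finite generation or torsion-freeness of $M$ over $U(\mh)$ lets you pass from the family of pointwise memberships (with $t$-dependent witnesses) to a single uniform expression $\omega=\sum_j D(v'_j,s'_j)\boxtimes z_j$ with $z_j\in I_{k-s'_j}M$. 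That uniformization is the actual content of the step, and it remains unexecuted in your write-up.
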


\begin{proof} Let us first compute $I_n(W_d\boxtimes {M})$. 
For any $u, v\in\C^d, n, s\in\Z^d$ and $y\in M$, we have
$$\aligned 
  &(D(u, 0)-(u|n))(D(v,s)\boxtimes y)
%=& [D(u, 0)-(u|n),D(v,s)]\boxtimes y+D(v,s)\boxtimes (D(u, 0)-(u|n))y\\ 
= D(v,s)\boxtimes (D(u, 0)-(u|n-s))y;\endaligned$$
and $$\aligned 
  I_n(W_d\boxtimes {M})
%=& \sum_{v\in\C^d}\sum_{s\in\Z^d}I_n((D(v,s)\boxtimes M)\\
%=& \sum_{s\in\Z^d}(\sum_{v\in\C^d}D(v,s)\boxtimes I_{n-s}(M))\\
=& \sum_{s\in\Z^d}W_d(s)\boxtimes I_{n-s}(M),
\endaligned$$
where $W_d(s)=\sum_{v\in\C^d}D(v,s)$ is the  root space of $W_d$.

Define the linear map $\gamma: W_d\boxtimes {\mathfrak{W}(M)}\rightarrow \mathfrak{W}(W_d\boxtimes {M})$ by
$$\aligned 
 & \gamma(D(u,r)\boxtimes(y+I_nM))\\
=& D(u,r)\boxtimes y +\sum_{s\in\Z^d}W_d(s)\boxtimes I_{n+r-s}(M).
\endaligned$$
The linearity and compatibility of $\gamma$ with the action of $\widetilde{W}_d$ can be verified straightforward. The details are left to the readers. We only prove that $\gamma$ is is well-defined and bijective.

%\noindent{\bf Claim 1}. $\gamma$ is well-defined and bijective.
Indeed, given $n\in\Z^d$ and finitely many $u_i\in\C^d, r_i\in\Z^d, y_i\in M$,
we have $\sum\limits_{i}D(u_i,r_i)\boxtimes(y_i+I_{n-r_i}M)=0$ in $W_d\boxtimes {\mathfrak{W}(M)}$ 
if and only if  
$$\sum_{i}D(u_i,r_i+\a) y_i\in I_{n+\a}M,\ \forall\ \a\in\Z^d,$$
if and only if
$$\sum_{i}D(u_i,r_i+\a)  y_i\in  I_{n+\a}\sum_{s\in\Z^d}W_d(s+\a)(M),\ \forall\ \a\in\Z^d,$$
if and only if
$$\sum_{i}D(u_i,r_i+\a)  y_i\in \sum_{s\in\Z^d}W_d(s+\a)( I_{n-s}(M),\ \forall\ \a\in\Z^d,$$
if and only if
 $$\sum_{i}D(u_i,r_i)\boxtimes y_i+\sum_{s\in\Z^d}W_d(s)\boxtimes I_{n-s}(M)=0,$$
in $\mathfrak{W}(W_d\boxtimes {M})$. The lemma follows.
\end{proof}

\begin{lemma}\label{fin gen} Let $M\in\mathcal{H}$, which is finitely generated and torsion free over $U(\mh)$.
The admissible $\widetilde{W}_d$-module $W_d\boxtimes {M}$  is a  finitely generated  $U(\mh)$-module when restricted to  $U(\mh)$.
\end{lemma}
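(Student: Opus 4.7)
The plan is to apply the weighting functor $\mathfrak W$ to reduce to the theory of covers of cuspidal weight modules, and then lift finite generation back to $\hat M=W_d\boxtimes M$.

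First I would observe that $\mathfrak W(M)$ is cuspidal. If $m_1,\dots,m_k$ generate $M$ over $U(\mh)$, then for each $n\in\Z^d$ the quotient $M/I_nM$ is spanned by the images of $m_1,\dots,m_k$, so $\dim\mathfrak W(M)_n\le k$. By the covering theorem of Billig--Futorny~\cite{BF}, the cover $W_d\boxtimes\mathfrak W(M)$ of a cuspidal module is again cuspidal, with weight multiplicities uniformly bounded by some constant $C=C(k,d)$. Combining this with Lemma~\ref{iso}, which identifies $\mathfrak W(\hat M)$ with $W_d\boxtimes\mathfrak W(M)$, we get $\dim(\hat M/I_n\hat M)\le C$ for every $n\in\Z^d$.

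Second, I would establish torsion-freeness of $\hat M$ over $U(\mh)$ by an argument parallel to Lemma~\ref{torsion}: the putative torsion submodule $\hat M'$ is stable under the $A_d$-action coming from admissibility, and its annihilator in $U(\mh)$ is invariant under the shift $f(\partial)\mapsto f(\partial-r)$ induced by $x^r$, forcing the annihilator to equal $U(\mh)$ and hence $\hat M'=0$. Lemma~\ref{cap I_iP} then yields $\bigcap_{n\in\Z^d}I_n\hat M=0$. Since cuspidal admissible $\widetilde W_d$-modules are finitely generated over $A_d$, a finite $A_d$-generating set of $\mathfrak W(\hat M)$ can be lifted to elements $w_1,\dots,w_p\in\hat M$; together with the evident generators $\{\partial_i\boxtimes m_j\}$, I would show that the $U(\mh)$-submodule $N$ they generate satisfies $N+I_n\hat M=\hat M$ for every $n\in\Z^d$. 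The key tool is the Weyl-algebra relation $x^s\partial_j=(\partial_j-s_j)x^s$ in $\overline U$, which induces the linear isomorphism $\hat M/I_0\hat M\to\hat M/I_s\hat M$ given by $x^s$ and thereby relates the $A_d$-generating structure on $\mathfrak W(\hat M)$ to a fixed finite set in $\hat M$.

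The main obstacle is passing from $N+I_n\hat M=\hat M$ for all $n\in\Z^d$ to $N=\hat M$: since $U(\mh)$ is not local, the classical Nakayama lemma fails, and one must combine the torsion-freeness of $\hat M$, the intersection $\bigcap_{n}I_n\hat M=0$, and the Zariski density of $\Z^d$ in $\C^d$---together, if needed, with the analogous bound $\dim(\hat M/I_\alpha\hat M)\le C$ for $\alpha\in\C^d$ obtained from the obvious extension of Lemma~\ref{iso} to $\mathfrak W^{(\alpha)}$---to conclude $\hat M/N=0$ via a support argument on the (a priori nonzero) quotient.
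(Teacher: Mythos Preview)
Your opening moves match the paper: $\mathfrak{W}(M)$ is cuspidal and the Billig--Futorny machinery applies. The gap is in the step you yourself flag as ``the main obstacle.'' Even granting everything up to that point, the implication
\[
N+I_\a\hat M=\hat M\ \text{for all}\ \a\in\C^d\quad\Longrightarrow\quad N=\hat M
\]
is simply false for $U(\mh)$-modules not already known to be finitely generated: the quotient $\hat M/N$ could look like a sum of copies of the fraction field $H$, which satisfies $I_\a H=H$ for every $\a$ and has empty support. No Zariski-density or support argument distinguishes this from zero, and your $N$ is only a $U(\mh)$-submodule, so the quotient carries no $\widetilde W_d$-structure to exploit. (A secondary issue: your appeals to Lemmas~\ref{torsion} and~\ref{cap I_iP} for $\hat M$ are circular, since both lemmas assume finite generation of the module in question; these particular conclusions can in fact be rescued by direct arguments using the evaluation maps $\phi\mapsto\phi(x^s)$ into $M$, but that still does not close the main obstacle.)

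The paper sidesteps this entirely by never running commutative algebra on $\hat M$. From cuspidality of $\mathfrak{W}(M)$ it extracts, via \cite{BF}, explicit operators
\[
\Omega_{i,j,\a,\beta,\gamma}^{(m)}=\sum_{s=0}^m(-1)^s\binom{m}{s}D(e_i,\a-s\gamma)D(e_j,\beta+s\gamma)
\]
that annihilate $\mathfrak{W}(M)$; since $M$ itself is finitely generated torsion-free by hypothesis, $\bigcap_\xi I_\xi M=0$ and hence $\Omega^{(m)}M=0$. Evaluating at $x^\eta$ turns this into a relation inside $\hat M$: each $D(e_i,\a)\boxtimes(D(e_j,\beta)v)$ is a combination of terms with strictly smaller $\|\a\|$, and induction yields $\hat M=\sum_{\|\gamma\|\le md}W_d(\gamma)\boxtimes M$, which is visibly finitely generated over $U(\mh)$. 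The key difference from your approach is that the annihilation relation is established on $M$, where all hypotheses are available, and then used \emph{constructively} to bound $\hat M$, rather than as input to a Nakayama-type conclusion on $\hat M$.
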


\begin{proof}  Since $M$  is a  finitely generated torsion-free  $U(\mh)$-module, we know that $\mathfrak{W}(M)$ is a Harish-Chandra $W_d$-module.  From Theorem 4.11 in \cite{BF} we know that  $\mathfrak{W}(M)$ is a quotient of some cuspidal admissible $\widetilde{W}_d$-module. Therefore there exists $m\in \N$ such that
$\Omega_{\a,\beta,\gamma,i,j}^{(m,\gamma)}\mathfrak{W}(M)=0$ 
for all $\a,\beta,\gamma\in \Z^d, i,j=1,2\ldots,d,$
where 
$$\Omega_{i,j,\a,\beta,\gamma}^{(m)}=\sum_{s=0}^m (-1)^s{m\choose s}D(e_i,\a-s\gamma)D(e_j,\beta+s\gamma).$$
By the module action of $W_d$ on $\mathfrak{W}(M)$, we deduce  
$$\Omega_{i,j,\a,\beta,\gamma}^{(m)}M\subseteq \bigcap_{\xi\in \Z^d} I_{\xi}M=0,\ \forall\  \a,\beta,\gamma\in \Z^d, i,j=1,2\ldots,d.$$

Let $\|\a||= |\a_1| + |\a_2|+\cdots+ |\a_d|$ for $\a=(\a_1,\ldots,\a_d)\in \Z^d$.
Note that $M=W_dM$. It is enough to prove by induction on $\|\a\|$ that 
$$D(e_i,\a)\boxtimes ( D(e_j,\beta) v)\in \sum_{\|\gamma\|\le md}  D(e_i,\gamma)\boxtimes M,$$ 
for all $v\in M, \a,\beta\in\Z^d, i,j\in\{1,\cdots,d\}.$
This is obvious for $\a\in\Z^d$ with $\|\a\|\le md$. Now we assume that $\|\a\|>md$. Without lose of generality, we may assume that $\a_1>m$. 
For any $\eta\in\Z^d$, we have
$$\aligned 
 &\sum_{s=0}^m (-1)^s {m\choose s} \Big(D(e_i,\a-se_1)\boxtimes \big(D(e_j,\beta+se_1)v\big)\Big)(x^\eta)\\
=&\sum_{s=0}^m (-1)^s   {m\choose s}D(e_i,\eta+\a-se_1)\big(D(e_j,\beta+se_1)v\big)\\
=&\Omega_{i,j,\eta+\a,\beta,e_1}^{(m)}(v)=0,
\endaligned$$
which imples 
$$\sum_{s=0}^m (-1)^s   {m\choose s} D(e_i,\a-se_1)\boxtimes (D(e_j,\beta+se_1)v)=0$$
in $W_d\boxtimes {M}$, that is,  
$$ \aligned 
& D(e_i,\a)\boxtimes (D(e_j,\beta)v)\\ =&-\sum_{s=1}^m(-1)^{s} {m\choose s} D(e_i,\a-se_1)\boxtimes\big(D(e_j,\beta+se_1)v\big), 
\endaligned$$ 
which belongs to $ \sum_{\|\gamma\|\le md}D(e_i,\gamma)\boxtimes M$
by induction hypothesis.
\end{proof}

\begin{theorem}\label{thm5.5} Let $M$ be a simple  $W_d$-module that is a finitely generated  $U(\mh)$-module  when restricted to $U(\mh)$.
Then $M$ is a simple quotient of the $W_d$-module $\F( P, V)$ for a simple $\K_d$-module $P$ that is   a finitely generated (torsion-free) $U(\mh)$-module, and a finite dimensional simple $\gl_d$-module $V$.
\end{theorem}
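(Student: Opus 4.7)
The plan is to realize $M$ as a quotient of the covering admissible $\widetilde{W}_d$-module $W_d\boxtimes M$ constructed before Lemma~\ref{cover}, and then extract the right $\widetilde{W}_d$-composition factor of this cover using Theorem~\ref{main1}. First I would apply Lemma~\ref{torsion-1} to conclude that $M$ is torsion-free over $U(\mh)$. Then I form the cover $W_d\boxtimes M$, about which two facts are already available: it is an admissible $\widetilde{W}_d$-module that is finitely generated over $U(\mh)$ (Lemma~\ref{fin gen}), and there is a canonical $W_d$-module epimorphism $\pi: W_d\boxtimes M \twoheadrightarrow M$ (Lemma~\ref{cover}). Observe that $\pi$ need not be a $\widetilde{W}_d$-map, since the associative algebra $A_d$ does not in general act on the simple $W_d$-module $M$.

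Since $U(\mh)$ is Noetherian and $W_d\boxtimes M$ is finitely generated over $U(\mh)$, every $\widetilde{W}_d$-submodule of $W_d\boxtimes M$ is also finitely generated over $U(\mh)$. Hence, by the corollary on finite composition length immediately preceding this theorem, $W_d\boxtimes M$ admits a finite composition series
\[
0 = N_0 \subsetneq N_1 \subsetneq \cdots \subsetneq N_k = W_d\boxtimes M
\]
as a $\widetilde{W}_d$-module, with each factor $N_i/N_{i-1}$ a simple admissible $\widetilde{W}_d$-module that is still finitely generated over $U(\mh)$. Applying Theorem~\ref{main1} to each factor yields $N_i/N_{i-1} \cong \F(P_i, V_i)$ for some simple $\K_d$-module $P_i$ finitely generated (hence by Lemma~\ref{torsion} torsion-free) over $U(\mh)$, and some finite-dimensional simple $\gl_d$-module $V_i$.

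To finish, I would use $\pi$ to select the relevant factor. Each $N_i$ is in particular a $W_d$-submodule of $W_d\boxtimes M$, so $\pi(N_i)$ is a $W_d$-submodule of $M$; simplicity of $M$ forces $\pi(N_i)\in\{0,M\}$. Choosing $i$ minimal with $\pi(N_i)=M$, minimality gives $\pi(N_{i-1})=0$, so $\pi$ descends to a nonzero, hence surjective, $W_d$-module homomorphism $\overline{\pi}: \F(P_i,V_i)\cong N_i/N_{i-1}\twoheadrightarrow M$. This exhibits $M$ as a simple $W_d$-quotient of $\F(P_i,V_i)$ with $P_i$ and $V_i$ of the required type.

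The substantive difficulty is already absorbed into the preparatory results, rather than into this final theorem itself. The hard step is the passage from finite generation of $M$ over $U(\mh)$ to finite generation of the cover $W_d\boxtimes M$ over $U(\mh)$, which is precisely Lemma~\ref{fin gen}; its proof relied on the cuspidal structure of the weighting $\mathfrak{W}(M)$ and the Billig--Futorny bound on the ideal $J$. Once that is in hand, finite composition length and the classification in Theorem~\ref{main1} do the rest, and the selection of the right composition factor is a routine simplicity argument.
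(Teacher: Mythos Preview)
Your argument is correct and follows the same overall strategy as the paper: pass to the admissible cover $W_d\boxtimes M$, use Lemma~\ref{fin gen} to see it lies in $\widetilde{\mathcal{H}}$, and then extract a simple admissible $\widetilde{W}_d$-subquotient that still surjects onto $M$, finally invoking Theorem~\ref{main1}. The only difference is in the extraction step: the paper picks an admissible cover of \emph{minimal $U(\mh)$-rank} and shows directly that such a minimal cover must already be simple as a $\widetilde{W}_d$-module, whereas you take a full $\widetilde{W}_d$-composition series of $W_d\boxtimes M$ (using Corollary~4.9) and select the first factor whose image under $\pi$ is all of $M$. Both finishes are routine once Lemmas~\ref{cover} and~\ref{fin gen} are in hand; your version is perhaps slightly more transparent, while the paper's avoids appealing to Corollary~4.9.
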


\begin{proof} From Lemmas \ref{cover}, \ref{fin gen},  there is an admissible  $\widetilde{W}_d$-module  $P_1$ that is  a finitely generated torsion-free $U(\mh)$-module of finite rank when restricted to $U(\mh)$ and 
a $\W_d$-module epimorphism $\phi: P_1\rightarrow M$. We may choose $P_1$ so that its rank $s$ over $U(\mh)$ is minimal.

We claim that $P_1$ is a simple  $\widetilde{W}_d$-module. Otherwise, $P_1$ has a nonzero maximal $\widetilde{W}_d$-submodule $P_2$.
Then from Lemma \ref{torsion}, $P_2$ and $P_1/P_2$ are torsion-free of rank both less than $s$. However since $M$ is simple as $W_d$ module, we must have either $\phi(P_2)=M$ or $\phi(P_2)=0$. Then either $P_2$ or $P_1/P_2$ has a simple $W_d$-quotient isomorphic to $M$. This contradicts the choice of $P_1$.

From  Theorem \ref{Simple}, we know that $P_1\cong \F( P, V)$ for a simple  $\K_d$-module $P$ that is   a finitely generated  torsion-free $U(\mh)$-module, and a finite dimensional irreducible $\gl_d$-module $V$. \end{proof}

Next we will determine all possible simple quotient modules of $\F(P, V)$ in Theorem 5.5.
Let  $V$ be a finite dimensional simple  $\gl_d$-module, and $P$ a simple $\K_d$-module   that is a finitely generated  $U(\mh)$-module. If $V$ is not isomorphic to any $\gl_d$-modules $V(\delta_k, k)$ for any $k= 1,2, \cdots, d$, from Corollary 3.6 in \cite{LLZ}  we know that the $W_d$-modules $\F(P, V)=P\otimes V$ are simple.  Now we will  determine all simple quotients of $\F(P, V(\delta_k, k))$ over $W_d$ for any $k= 1,2, \cdots, d$.

Let us first establish some general results on finitely generated torsion-free $U(\mh)$-modules.
For convenience, we denote $R=U(\mh)$, and for any prime ideal $p$ of $R$,
let $R_p$ be the localization of $R$ at $p$ and 
$P_{p}$ be the localization of the $R$-module $P$ at $p$. 
In particular, for any maximal ideal $I_\a$ of $R$ and an $R$-module $M$,
we have $M/I_\a M$ is a vector space over $R/I_\a R\cong \C$. 
Moreover, we have the following canonical isomorphisms
$$M/I_{\a}M\cong (M/I_{\a}M)_{I_{\a}}\cong M_{I_{\a}}/I_{\a}M_{I_{\a}}$$
of $R$-modules. %as well as $R/I_\a R\cong R_{I_\a}/I_\a R_{I_\a}$-modules.
Recall that $H=R_{(0)}$ is the the quotient field of $R$, where $(0)$ is the zero ideal of $R$.  

\begin{lemma}\label{L-free}Let $M$ be a finitely generated torsion-free $U(\mh)$-module. Then $M$ is a free $U(\mh)$-module if and only if $\dim M/I_{\a}M=\rank M$ for all $\a\in \C^d$.\end{lemma}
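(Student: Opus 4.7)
The forward direction is immediate: if $M\cong R^n$ with $R=U(\mh)$, then for any $\a\in\C^d$ we have $M/I_\a M\cong (R/I_\a)^n\cong\C^n$, and tensoring with $H$ gives $M_H\cong H^n$, so $\rank M = n = \dim M/I_\a M$. The content is in the converse.

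For the converse, set $n=\rank M$, and recall that by the Hilbert Nullstellensatz the maximal ideals of $R=\C[\partial_1,\dots,\partial_d]$ are exactly the ideals $I_\a$, $\a\in\C^d$. Fix such a maximal ideal $m=I_\a$. By hypothesis $\dim_\C M/mM=n$, and the canonical isomorphism $M/mM\cong M_m/mM_m$ combined with Nakayama's lemma for the local ring $R_m$ shows that $M_m$ is generated by $n$ elements over $R_m$. Pick such a surjection $\varphi:R_m^n\twoheadrightarrow M_m$ and let $K=\ker\varphi$. Since $K$ is a submodule of the free $R_m$-module $R_m^n$, it is torsion-free over $R_m$. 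On the other hand, tensoring the short exact sequence $0\to K\to R_m^n\to M_m\to 0$ with the flat $R_m$-module $H$ gives a surjection $H^n\twoheadrightarrow M_H$ of $H$-vector spaces, both of dimension $n$, hence an isomorphism; therefore $K\otimes_{R_m}H=0$, which means every element of $K$ is annihilated by some nonzero element of $R_m$. Combined with torsion-freeness this forces $K=0$, so $M_m\cong R_m^n$ is free.

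Thus $M$ is locally free of (constant) rank $n$ at every maximal ideal of $R$. Since $R$ is Noetherian and $M$ is finitely generated, $M$ is finitely presented; a standard fact from commutative algebra (for example, that $\mathrm{Ext}^1_R(M,-)$ commutes with localization for finitely presented $M$) then yields that $M$ is projective over $R$. Finally, by the Quillen--Suslin theorem applied to the polynomial ring $R=\C[\partial_1,\dots,\partial_d]$, every finitely generated projective $R$-module is free, so $M$ is free of rank $n$, as desired.

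There is no serious obstacle: the only point requiring care is the step from local freeness at every maximal ideal to projectivity of $M$, which uses finite presentation (guaranteed by Noetherianity of $R$) and then invokes Quillen--Suslin, a tool that the paper has already declared as one of its main commutative-algebra inputs.
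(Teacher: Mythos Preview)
Your proof is correct and follows essentially the same approach as the paper: show that $M_m$ is free of rank $n$ at every maximal ideal $m=I_\a$ (Nakayama plus a rank comparison with $M_H$), deduce projectivity from local freeness and finite presentation, and conclude with Quillen--Suslin. The only cosmetic difference is that the paper shows the $n$ generators of $M_{I_\a}$ are $R_{I_\a}$-linearly independent by passing to $H$ directly, whereas you phrase the same rank comparison as the vanishing of the kernel in a short exact sequence; these are equivalent formulations of the same observation.
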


\begin{proof} We need only to prove the ``if'' part of the lemma since the ``only if'' part is clear. Let $r=\rank M$ which is actually $\dim_HM_H$. Since $M_{I_{\a}}/I_{\a}M_{I_{\a}}\cong M/I_{\a}M$ is of dimension $r$, from the Nakayama's lemma, we know that as an $R_{I_{\a}}$ module, $M_{I_{\a}}$ is generated by $r$ elements. 
Say $M_{I_{\a}}=R_{I_\a}w_1(\a)+\cdots+R_{I_\a}w_r(\a)$, where $w_1(\a),\cdots,w_r(\a)\in  M_{I_{\a}}$   are dependent on $\a$. 
Noticing that $R_{I_\a}\subset R_{(0)}=H$, we have $M_{(0)}=Hw_1+\cdots+Hw_r$. 
Since $\rank(M)=r$, we see that $w_1,\ldots,w_r$ are $H$-linearly independent, 
hence $R_{I_\a}$-linearly independent, and form an $R_{I_\a}$-basis of $M_{I_{\a}}$.

Recall that any finitely generated  module  over a Noetherian algebra is a finitely presented module. From Corollary 3.4 on Page 19 in \cite{L}, we know that $M$ is a finitely generated projective module over $R$. Hence from  Quillen-Suslin Theorem $M$ is $R$-free.\end{proof}

\begin{lemma}\label{lemma-dim}Let $M$ be a $W_d$-module that is a finitely generated torsion-free  $U(\mh)$-module of rank $r$. Then   $\dim M/I_{\a}M=r$ for all $\a\in \C^d\backslash \{0\}$. \end{lemma}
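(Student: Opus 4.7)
The plan is to apply the weighting functor $\mathfrak{W}^{(\a)}$ of Section 3 to $M$, observe the result is a cuspidal weight $W_d$-module, invoke the classification of simple cuspidal $W_d$-modules to show the weight multiplicities are constant away from the weight $0$, and finally pin that constant down as $r=\rank M$ by a Zariski density argument.

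Set $R=U(\mh)=\C[\partial_1,\dots,\partial_d]$ and $r=\rank M$. Because $M$ is finitely generated torsion-free of rank $r$ over the integral domain $R$, $M$ embeds in a free $R$-module of rank $r$ and the fiber dimension function $\a\mapsto\dim_{\C} M/I_{\a}M$ is upper semicontinuous with generic value $r$; hence $\dim_{\C} M/I_{\a}M\geq r$ for all $\a\in\C^d$, and the ``jumping locus''
$$S:=\{\a\in\C^d\,:\,\dim_{\C}M/I_{\a}M>r\}$$
is a proper Zariski-closed subset of $\C^d$. It therefore suffices to prove $S\subseteq\{0\}$.

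For each $\a\in\C^d$ I form the weight $W_d$-module
$$N^{(\a)}:=\mathfrak{W}^{(\a)}(M)=\bigoplus_{n\in\Z^d}(M/I_{\a+n}M)\otimes x^n,$$
whose weight space at $\a+n$ is canonically $M/I_{\a+n}M$. Since $M$ is finitely generated over $R$, every weight space has $\C$-dimension bounded by the number of $R$-generators of $M$, so $N^{(\a)}$ is a cuspidal weight $W_d$-module of finite composition length. Exactly as in the proof of Lemma \ref{fin gen}, Theorem 4.11 of \cite{BF} yields an $m\in\N$ for which all the operators $\Omega^{(m)}_{i,j,\a',\beta,\gamma}$ annihilate $N^{(\a)}$. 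Combined with the classification of simple cuspidal $W_d$-modules (the construction $\F(A_d(\a',0),V)$ together with the exceptional behaviour of the $\gl_d$-modules $V(\delta_k,k)$ recalled in the introduction), each simple composition factor of $N^{(\a)}$ has constant weight multiplicity on its $\Z^d$-coset of support, the only possible exception occurring at the single weight $0$. Summing multiplicities over composition factors, the function $n\mapsto\dim_{\C}M/I_{\a+n}M$ is therefore constant on $\{n\in\Z^d:\a+n\ne 0\}$; call this constant $c_{\a}$.

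Finally, since $\a+\Z^d$ is Zariski dense in $\C^d$ and $S$ is a proper Zariski-closed subset, there exists $n\in\Z^d$ with $\a+n\notin S$ and $\a+n\ne 0$; at such $n$ the dimension equals the generic value $r$, forcing $c_{\a}=r$. For any $\a\in\C^d\setminus\{0\}$ we then take $n=0$ to get $\a+n=\a\ne 0$, whence $\dim_{\C} M/I_{\a} M=c_{\a}=r$. The hard part will be the constancy assertion of the preceding paragraph: extracting from the $\Omega^{(m)}$-annihilation relations together with the classification of simple cuspidal $W_d$-modules the precise statement that the only exceptional weight-multiplicity behaviour takes place at the single weight $0$—which is exactly the reason the lemma is only asserted away from $\a=0$.
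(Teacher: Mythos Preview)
Your argument is correct and follows the same two-step strategy as the paper: locate some $n\in\Z^d$ with $\dim M/I_{\a+n}M=r$, then use that $\mathfrak{W}^{(\a)}(M)$ is cuspidal to transport this equality to $\a$. The only difference is cosmetic: you find the good $n$ via upper semicontinuity of fiber dimension and Zariski density of $\a+\Z^d$, while the paper does it concretely by clearing a single denominator $f\in U(\mh)$ with $M\subseteq\frac{1}{f}\sum_i Rv_i$ and using $\bigcap_{n\ne -\a}I_{\a+n}=0$ to pick $n$ with $f\notin I_{\a+n}$.
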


\begin{proof}
Let $v_1,v_2,\ldots, v_r\in M$ be an $H$-basis of $M_{(0)}$. Then there exists some $f\in R$ such that \begin{equation}\label{eq2}M\subseteq \frac{1}{f}(Rv_1+Rv_2+\ldots Rv_r).\end{equation}
 
 For any $\a\in \C^d\backslash\{0\}$, from the fact that $\bigcap_{n\in \Z^d\backslash\{-\a\}} I_{\a+n}=0$, we see that there exists some $n\in \Z^d\backslash\{-\a\} $ such that $f\not\in I_{\a+n}$. From (\ref{eq2}), we see that 
 $M_{I_{\a+n}}=fM_{I_{\a+n}}=R_{I_{\a+n}}v_1+R_{I_{\a+n}}v_2+\ldots R_{I_{\a+n}}v_r$,  and further 
 $\{v_1,\ldots,v_r\}$ becomes an $R_{I_{\a+n}}$-basis of $M_{I_{\a+n}}$.
Then 
$$\aligned 
      & M/I_{\a+n}M\cong %(M/I_{\a+n}M)_{I_{\a+n}}\\
M_{I_{\a+n}}/I_{\a+n}M_{I_{\a+n}}\\
    = & (R_{I_{\a+n}}v_1\oplus\cdots \oplus R_{I_{\a+n}}v_r)/I_{\a+n}(R_{I_{\a+n}}v_1\oplus \cdots\oplus R_{I_{\a+n}}v_r)\\
\cong & \bigoplus_{i=1}^r R_{I_{\a+n}}v_i/I_{\a+n}(R_{I_{\a+n}}v_i)
%\cong & {(R_{I_{\a+n}}/I_{\a+n}R_{I_{\a+n}})}^r\\
\cong  (R/I_{\a+n})^r\endaligned$$ as $R$ modules. In particular, we have $\dim M/I_{\a+n}M=r$.

Considering the cuspidal module $\mathfrak{W}^{(\a)}=\bigoplus_{m\in \Z^d} (M/I_{\a+m}M)\otimes x^m$ over $W_d$, we have $\dim M/I_{\alpha}M=\dim M/I_{\a+n}M=r$.
\end{proof}

From this lemma, we have

\begin{corollary}\label{free-3}\begin{itemize}\item[(1).] Any  $\K_d$-module that is a  finitely generated $U(\mh)$-module is a free $U(\mh)$-module.
\item[(2).] Any admissible $\widetilde W_d$-module that is a  finitely generated $U(\mh)$-module is a free $U(\mh)$-module.
\item[(3).] A nontrivial simple $W_d$-module $M$ that is a finitely generated $U(\mh)$-module is a free $U(\mh)$-module if and only if $\dim M/I_0M=\rank M$. \end{itemize}\end{corollary}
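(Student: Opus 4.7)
The plan is to deduce all three parts uniformly from Lemma \ref{L-free} together with Lemma \ref{lemma-dim}. Lemma \ref{L-free} tells us that a finitely generated torsion-free $U(\mh)$-module $M$ of rank $r$ is free if and only if $\dim M/I_\a M = r$ for every $\a\in\C^d$, while Lemma \ref{lemma-dim} already supplies this equality whenever $\a\neq 0$. So in each of (1)--(3) the remaining task is to analyze the single point $\a=0$.

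For (1) and (2), I would first note that any $\K_d$-module is automatically admissible as a $\widetilde{W}_d$-module, so both cases fall under the admissible framework and Lemma \ref{torsion} applies to give torsion-freeness. The admissibility axiom forces every $x^s$ to act as an invertible linear operator on $M$, and the commutator identity $[\partial_i,x^s]=s_i x^s$ (a special case of the relations in $\widetilde W_d$) yields $x^s\partial_i x^{-s}=\partial_i-s_i$. Hence conjugation by $x^s$ carries the maximal ideal $I_0$ of $U(\mh)$ onto $I_s$, so that $x^s$ descends to a $\C$-linear isomorphism $M/I_0 M\cong M/I_s M$. Picking any $s\in\Z^d\setminus\{0\}$ and applying Lemma \ref{lemma-dim} then yields $\dim M/I_0 M=r$, and Lemma \ref{L-free} completes the argument.

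For (3), Lemma \ref{torsion-1} supplies torsion-freeness of the nontrivial simple $W_d$-module $M$, and Lemma \ref{lemma-dim} again handles $\a\neq 0$. The only remaining input needed to invoke Lemma \ref{L-free} is the single equality $\dim M/I_0M=\rank M$, which therefore becomes both necessary and sufficient for freeness. The conceptual point worth emphasizing, and the reason (3) needs this hypothesis while (1) and (2) do not, is that in the pure $W_d$-setting there is no invertible operator of the form $x^s$ available to conjugate $I_0$ onto some $I_s$ with $s\ne 0$, so the shift trick of (1)--(2) genuinely breaks down at $\a=0$ and the dimension condition has to be imposed by hand. Since all the ingredients (the shift-by-$x^s$ isomorphism, Lemma \ref{torsion}, Lemma \ref{torsion-1}, Lemma \ref{lemma-dim}, Lemma \ref{L-free}) are already in place, the proof should be short; the only mild subtlety is correctly identifying the $R$-module structure transported by $x^s$ when checking that $x^s I_\a M=I_{\a+s}M$.
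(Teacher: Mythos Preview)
Your proposal is correct and follows essentially the same route as the paper: reduce (1) to (2); for (2) use the invertibility of $x^s$ together with $x^sI_0M=I_sM$ to identify $M/I_0M\cong M/I_sM$ and then apply Lemmas \ref{lemma-dim} and \ref{L-free}; and for (3) invoke Lemmas \ref{torsion-1}, \ref{lemma-dim}, and \ref{L-free} directly. Your write-up is in fact slightly more explicit than the paper's in flagging the torsion-freeness hypotheses (via Lemmas \ref{torsion} and \ref{torsion-1}) needed to invoke Lemma \ref{lemma-dim}.
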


\begin{proof} (1) follows from (2), since any $\K_d$-module is automatically an admissible $\widetilde{W}_d$-module, as we remarked in Section 2. And (3) follows directly from Lemma \ref{L-free} and Lemma \ref{lemma-dim}. 

For (2), we take an admissible $\widetilde{W}_d$-module $M$, then $x^nM=M$ and $x^nI_0M=I_{n}M$, 
which implies $M/I_0M\cong M/I_nM=\rank M$ for any $n\in\Z^d$. Again the result follows from Lemma \ref{L-free} and Lemma \ref{lemma-dim}.
\end{proof}

Let $P$ be an admissible $\widetilde{W}_d$-module.
Then we have the $W_d$-module homomorphisms for $k=1,2,\cdots, d$,
\begin{equation*}\begin{array}{lrcl}
\pi_{k-1}:& \F(P,V(\delta_{k-1},k-1) & \rightarrow & \F(P, V(\delta_{k},k)),\\
      & y\otimes v & \mapsto & \sum\limits_{j=1}^{d} \partial_jy\otimes e_j\wedge v,
\end{array}\end{equation*}
for all $y\in P$ and $v\in V(\delta_{k-1}, k-1)$. Note that the definition for $\phi_{k-1}$ has different form from that in \cite{LLZ}, but they are essentially same using Theorem 2.3.
Denote $ \L_d(P,k)=\text{Im}\ \pi_{k-1}$ and $\tilde \L_d(P,k)=\text{Ker}\ \pi_{k}$,
where $\pi_d=0$ and $\tilde{\L}_d(P,d)=\F(P, V(\delta_{d},d))$.

Thanks to the isomorphism between $F(P,V)$ and $\F(P,V)$ in Theorem 2.3 we can collect some results
on these modules from \cite{LLZ}. If  $k=1,\cdots,d$, $P$ is a simple $\K_d$-module and finitely generated over $U(\mh)$, then
\begin{itemize}\item[(1).] $\F(P, V(\delta_{0},0))$ is simple, 
\item[(2).] $\F(P, V(\delta_{k},k))$ is not simple, 
\item[(3).] $\L_d(P,k)$ is simple, 
\item[(4).] $\L_d(P,k)\subseteq \tilde \L_d(P,k)$, 
\item[(5).] $\tilde{\L}_d(P,k)/\L_d(P,k)$ is trivial,  
\item[(6).] $\F(P, V(\delta_{d},d))/\L_d(P,d)$ is trivial.\end{itemize}
Here the only   thing we might need to explain is that, since $P$ is a free $U(\mh)$-module (Lemma \ref{free-3}(1)),
we have $\sum_{i=1}^d \partial_iP\ne P$ and $\F(P, V(\delta_d,d))$ is not simple by
Corollary 3.6 in \cite{LLZ}.

Recall from Lemma \ref{p3.5} that $$\mathfrak{W}(\F(P,V(\delta_{k},k)))\cong \F(\mathfrak{W}(P),V(\delta_{k},k)).$$
We have the $W_d$-module homomorphism 
\begin{equation*}\aligned 
\mathfrak{W}(\pi_{k-1}):\ \F(\mathfrak{W}(P),V(\delta_{k-1},k-1)) &\rightarrow \F(\mathfrak{W}(P), V(\delta_{k},k))\\
                    (y+I_nP)\otimes x^n\otimes v &\mapsto \sum\limits_{j=1}^{d} (\partial_jy+I_nP)\otimes x^n\otimes e_j\wedge v,
\endaligned\end{equation*}
for all $y\in P, v\in V(\delta_{k-1}, k-1)$ and $n\in \Z^d$. 

\begin{lemma}\label{lemma-main3} Let notations be as above except that $P$ is a simple 
$\K_d$-module which is finitely generated $U(\mh)$-module of rank $r$.  For all $i=1,2,\ldots,d$,  
\begin{itemize}
\item[(1).] $\rank \L_d(P,i)=\rank \tilde \L_d(P,i)=r{{d-1}\choose i-1}$;
\item[(2).] $\rank(\L_d(P,i)/I_0\L_d(P,i))=r{{d}\choose i-1}$;
\item[(3).] $\L_d(P,1)\cong \F(P,V(0,0))$ is a free $U(\mh)$-module;
\item[(4).] $\L_d(P,i)$ are not   free $U(\mh)$-modules.
\end{itemize}
\end{lemma}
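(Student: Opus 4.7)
The plan is to interpret the chain of homomorphisms $\pi_{k-1}$ as the ascending Koszul differentials attached to the regular sequence $\partial_1,\ldots,\partial_d$ in $R=U(\mh)=\C[\partial_1,\ldots,\partial_d]$, tensored with $P$. By Corollary~\ref{free-3}(1), $P$ is $R$-free of rank $r$, so each $\F(P,V(\delta_k,k))\cong P\otimes\bigwedge^k\C^d$ is $R$-free of rank $r{d\choose k}$, and the formula $\pi_{k-1}(y\otimes v)=\sum_j \partial_j y\otimes (e_j\wedge v)$ coincides with the ascending Koszul differential on $P\otimes\bigwedge^{\bullet}\C^d$. Since $\partial_1,\ldots,\partial_d$ is a regular sequence in $R$ and $P$ is $R$-flat, the induced complex
\begin{equation*}
0\to \F(P,V(\delta_0,0))\xrightarrow{\pi_0}\cdots\xrightarrow{\pi_{d-1}}\F(P,V(\delta_d,d))\to 0
\end{equation*}
is exact in every degree strictly less than $d$, and its top cohomology is canonically $P/I_0P$.

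For (1), exactness in low degrees gives short exact sequences $0\to \L_d(P,i)\to \F(P,V(\delta_i,i))\to \L_d(P,i+1)\to 0$ for $1\le i\le d-1$. Together with $\rank\L_d(P,1)=r$ (from the injectivity of $\pi_0$, justified in the next paragraph) and Pascal's identity, this inductively yields $\rank\L_d(P,i)=r{d-1\choose i-1}$. Exactness also forces $\tilde\L_d(P,i)=\L_d(P,i)$ for $i<d$, and for $i=d$ one has $\tilde\L_d(P,d)=\F(P,V(\delta_d,d))$ of rank $r=r{d-1\choose d-1}$, completing~(1).

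For (3), $\pi_0$ is injective because $\sum_j \partial_j y\otimes e_j=0$ in the direct sum $\bigoplus_j P\otimes e_j$ forces each $\partial_j y=0$, which by torsion-freeness of $P$ over $R$ gives $y=0$; combined with the observation that $\gl_d$ acts as zero on $V(\delta_0,0)$ so that (\ref{2.1}) degenerates and produces $\F(P,V(0,0))\cong P$ as $W_d$-modules, one obtains $\L_d(P,1)\cong P$, which is $R$-free by Corollary~\ref{free-3}(1). For (2), the truncated Koszul complex is a free $R$-resolution of $\L_d(P,i)$:
\begin{equation*}
\cdots\to\F(P,V(\delta_{i-2},i-2))\xrightarrow{\pi_{i-2}} \F(P,V(\delta_{i-1},i-1))\to \L_d(P,i)\to 0.
\end{equation*}
Tensoring with $R/I_0=\C$ kills every differential (each is multiplication by some $\partial_j\in I_0$), so by right-exactness
\begin{equation*}
\L_d(P,i)/I_0\L_d(P,i)\cong \F(P,V(\delta_{i-1},i-1))/I_0\F(P,V(\delta_{i-1},i-1))\cong (P/I_0P)\otimes\bigwedge^{i-1}\C^d,
\end{equation*}
whose $\C$-dimension is $r{d\choose i-1}$.

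Finally, (4) follows directly from Corollary~\ref{free-3}(3): each $\L_d(P,i)$ is a nontrivial simple $W_d$-module (nonzero and $U(\mh)$-torsion-free as a submodule of the free module $\F(P,V(\delta_i,i))$), so it is $R$-free if and only if $r{d\choose i-1}=r{d-1\choose i-1}$; by Pascal this difference equals $r{d-1\choose i-2}$, which is strictly positive for $2\le i\le d$. The one technical point requiring care is verifying that the identification of $\pi_{k-1}$ with the Koszul differential transports the $R$-module exactness faithfully into genuine $W_d$-module short exact sequences used in (1) and (2); once this bookkeeping is in place, everything else is formal.
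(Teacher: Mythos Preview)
Your proof is correct and takes a genuinely different, more structural route than the paper's. You recognize the maps $\pi_{k-1}$ as the ascending Koszul differentials for the regular sequence $\partial_1,\ldots,\partial_d$ acting on the free $R$-module $P$, and then read off (1) and (2) from standard facts about Koszul resolutions. In particular your argument actually yields the stronger statement $\tilde\L_d(P,i)=\L_d(P,i)$ for $i<d$ (exactness of the cochain Koszul complex below top degree), whereas the paper only argues that the two have the same $U(\mh)$-rank, using the fact imported from \cite{LLZ} that $\tilde\L_d(P,k)/\L_d(P,k)$ is a trivial $W_d$-module and hence torsion over $U(\mh)$. For (2) the paper fixes a $U(\mh)$-basis $w_1,\ldots,w_r$ of $P$ and exhibits by a direct degree argument an explicit complement $X$ to $I_0\L_d(P,i)$ inside $\L_d(P,i)$; your computation via tensoring the truncated Koszul resolution with $R/I_0$ (all differentials die since each $\partial_j\in I_0$) is cleaner and avoids choosing a basis. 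Parts (3) and (4) are handled the same way in both proofs. One small remark: the hedge in your last sentence is unnecessary---(1) and (2) are purely $R$-module statements, so $R$-module exactness is exactly what is needed; the $W_d$-structure enters only in (3) (where $\pi_0$ being a $W_d$-homomorphism gives the isomorphism $\L_d(P,1)\cong\F(P,V(0,0))$) and in (4) (to invoke Corollary~\ref{free-3}(3), which requires simplicity as a $W_d$-module).
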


\begin{proof}(1). Recall that $\F(P, V(\delta_i,i))$ is a free $U(\mh)$-module. Hence $\L_d(P,i)$ and $\tilde{\L}_d(P,i)$ are both torsion free over $U(\mh)$ and of the same rank, since $U(\mh)\tilde{\L}_d(P,i)\subset  {\L}_d(P,i)$. Moreover from $$\F(P, V(\delta_i,i))/\tilde{\L}_d(P,i)=\L_d(P,i+1),\ \forall\ i=1,\ldots,d-1,$$ we have \begin{equation}\label{eq3}\rank \L_d(P,i)+\rank \L_d(P, i+1)=\rank \F(P, V(\delta_i,i))=r{{d}\choose i}.\end{equation} Recall that $\L_d(P,1)\cong \F(P,V(0,0))$. Then $\rank \L_d(P,1)=r$. Now from (\ref{eq3}) we   deduce $\rank \L_d(P,i)=r{{d-1}\choose i-1}$ for all $ i=1,2,\ldots,d$ by induction on $i$.

(2). Let $\{w_1,\cdots,w_r\}$ be a $U(\mh)$-basis of $P$. Note that  
$$\L_d(P,i)\hskip -3pt =\hskip -3pt 
\span\left\{\sum_{j=1}^d \partial_j w\otimes e_j\wedge v\,\Big|\,w\in P, v\in \bigwedge^{i-1} \C^d\right\}.$$
$$I_{0}\L_d(P,i)\hskip -3pt =\hskip -3pt 
\span\left\{\sum_{j=1}^d \partial_j w\otimes e_j\wedge v\,\Big|\,w\in I_{0}P, v\in \bigwedge^{i-1} \C^d\right\}.$$
By considering the total degree on $\partial_1,\ldots, \partial_d$, it is easy to deduce the following vector space decomposition $\L_d(P,i)=X\oplus I_0\L_d(P,i)$, where
\begin{equation}%\label{eq1}
X\hskip -3pt =\hskip -3pt \span\hskip -3pt \left\{\hskip -3pt \sum_{j=1}^d \partial_j w_k\otimes e_j\wedge v\,\Big|\,v\in \bigwedge^{i-1} \C^d\hskip -3pt, k=1,2,\ldots, r\hskip -3pt \right\}.
\end{equation}
Take any basis $v_1,\cdots,v_s$ of $\bigwedge^{i-1} \C^d$, where $s={{d}\choose i-1}$. 
It is easy to check that $\sum_{j=1}^d \partial_j w_k\otimes e_j\wedge v_i, i=1,\cdots,s, k=1,\cdots,r$, 
form a basis of $X$. Hence $\dim \L_d(P,i)/I_0\L_d(P,i)=\dim X=r{{d}\choose i-1}$.

(3) is clear. (4) follows from (1) and (2) by Lemma \ref{free-3} (3). 
\end{proof}
  
\begin{lemma}\label{trivial} Let $P$ be a simple $\K_d$-module which is a finitely generated $U(\mh)$-module. 
Then as a $W_d$-module,
\begin{itemize}
\item[(a).] $\F(P, V(\delta_k,k))$ has finite composition length for $k=1,\cdots,d$;
\item[(b).] $\F(P, V(\delta_d,d))$ has a unique minimal submodule $\L_d(P, d)$ and the quotient
$\F(P, V(\delta_d,d))/\L_d(P, d)$ is trivial;
\item[(c).] $\F(P, V(\delta_k,k))$ has a unique minimal submodule $\L_d(P,k)$, a unique maximal
submodule $\tilde{\L}_d(P,k)$, the quotient $\tilde{\L}_d(P,k)/\L_d(P,k)$ is trivial, 
and $\F(P, V(\delta_k,k))/\tilde{\L}_d(P,k)\cong\L_d(P,k+1)$ for $k=1,\cdots\hskip-1pt,\hskip-1pt d-1$.
\end{itemize}
\end{lemma}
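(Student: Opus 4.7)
The core strategy will be built around the filtration $0 \subseteq \L_d(P,k) \subseteq \tilde\L_d(P,k) \subseteq \F(P, V(\delta_k,k))$, whose consecutive factors are the simple $\L_d(P, k)$, the trivial $T := \tilde\L_d(P, k)/\L_d(P, k)$, and the simple $\L_d(P, k+1) \cong \F(P, V(\delta_k,k))/\tilde\L_d(P, k)$ read off from facts (3), (5) preceding the lemma and the definition of $\pi_k$ (for $k = d$ the middle factor is absent and we have $0 \subseteq \L_d(P, d) \subseteq \F$ with trivial top given by fact (6)). The basic structural point I will record first is that $P$ is $U(\mh)$-free of rank $r$ by Corollary~\ref{free-3}(1), so $\F(P, V(\delta_k, k)) \cong P^{\binom{d}{k}}$ is free of rank $r\binom{d}{k}$ over $U(\mh)$; consequently every $W_d$-submodule is finitely generated and torsion-free over $U(\mh)$, any trivial $W_d$-submodule (annihilated by $I_0$ and torsion-free) is zero, and any trivial $W_d$-subquotient (finitely generated and annihilated by $I_0$) is finite-dimensional over $\C$.

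Part (a) will follow at once from the finite-dimensionality of the trivial factor: since $W_d$ acts by zero on $T$, the $W_d$-submodules of $\tilde\L_d(P, k)$ containing $\L_d(P, k)$ correspond bijectively to $\C$-subspaces of $T$, and refining through $T$ produces a $W_d$-composition series of finite length $2 + \dim_\C T$ (respectively $1 + \dim_\C \F/\L_d(P,d)$ when $k = d$). For part (b), $\L_d(P, d)$ is simple and hence minimal, and any nonzero $N \subseteq \F(P, V(\delta_d, d))$ with $N \cap \L_d(P, d) = 0$ would embed into the trivial quotient $\F/\L_d(P, d)$, making $N$ a trivial submodule of a torsion-free module, hence $N = 0$, a contradiction.

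For the unique-minimal part of (c), I will run the analogous argument twice: if $N$ is nonzero with $N \cap \L_d(P, k) = 0$, then $N \cap \tilde\L_d(P, k)$ embeds into $T$ and so is trivial, hence zero; then $N$ embeds into the simple $\L_d(P, k+1) = \F/\tilde\L_d(P, k)$ and must equal it, giving $\F = N \oplus \tilde\L_d(P, k)$. Passing to $/I_0$ and using Lemma~\ref{lemma-main3}(2) yields $\dim_\C \F/I_0\F = r\binom{d}{k} = \dim_\C N/I_0 N$, so $\tilde\L_d(P, k) = I_0 \tilde\L_d(P, k)$; iterating and using $\bigcap_n I_0^n = 0$ inside the free $U(\mh)$-module $\F$ then forces $\tilde\L_d(P, k) = 0$, contradicting $\L_d(P, k) \neq 0$.

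The main obstacle is the unique-maximal claim in (c). A maximal submodule $M$ either contains $\tilde\L_d(P, k)$ (in which case $M = \tilde\L_d(P, k)$ by simplicity of $\F/\tilde\L_d(P, k)$) or satisfies $M + \tilde\L_d(P, k) = \F$; combining this alternative with $\L_d(P, k) \subseteq M$ (from the unique minimal) shows $\F/M \cong \tilde\L_d(P, k)/(M \cap \tilde\L_d(P, k))$ is a quotient of the trivial module $T$, hence a $1$-dimensional trivial $W_d$-quotient of $\F$. It therefore suffices to show $\F(P, V(\delta_k, k))$ has no nonzero trivial $W_d$-quotient for $k \le d-1$. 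Given a $W_d$-invariant linear functional $\phi : \F \to \C$, the $r = 0$ case gives $\phi(\partial_j p \otimes v) = 0$, so $\phi$ vanishes on $I_0 P \otimes V(\delta_k, k)$. Expanding $\phi((x^r \partial_j)(p \otimes v)) = 0$ via \eqref{2.5} and using $x^r \partial_j = \partial_j x^r - r_j x^r$ in $\K_d$ to kill the term landing in $I_0 P \otimes V$, one obtains $\sum_i r_i \phi(z \otimes E_{ij} v) = r_j \phi(z \otimes v)$ for all $z \in P$, $v \in V(\delta_k, k)$, $r \in \Z^d$, $j$. Setting $r = e_i$ yields $\phi(z \otimes E_{ij} v) = \delta_{ij} \phi(z \otimes v)$, and summing the diagonal entries, together with $\sum_j E_{jj}$ acting on $V(\delta_k, k)$ as the scalar $k$, produces $(k - d)\phi \equiv 0$; for $k < d$ this forces $\phi = 0$, completing the proof.
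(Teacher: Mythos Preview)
Your proof is correct, and for parts (a) and (b) it follows essentially the same line as the paper (torsion-freeness rules out trivial submodules; the known filtration with simple ends and trivial middle gives finite length). For part (c), however, you take a genuinely different and more elementary route than the paper.

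For the unique minimal submodule, the paper argues via the weighting functor: assuming a second simple submodule $N$, it obtains $\F(P,V(\delta_k,k))=N\oplus\tilde\L_d(P,k)$, applies $\mathfrak{W}$ to land in $\F(A_d,V(\delta_k,k))^r$, and then shows $\mathfrak{W}(N)\cong\L_d(A_d,k+1)^r$ would be a direct summand of $\F(A_d,V(\delta_k,k))^r$, contradicting the known indecomposability of $\F(A_d,V(\delta_k,k))$. You instead stay inside $\F(P,V(\delta_k,k))$: from the same direct sum decomposition you compare $\dim_\C(\cdot/I_0\cdot)$ using Lemma~\ref{lemma-main3}(2), conclude $\tilde\L_d(P,k)=I_0\tilde\L_d(P,k)$, and finish with $\bigcap_n I_0^n\F=0$. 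This avoids both the functor $\mathfrak{W}$ and the appeal to indecomposability of the classical Larsson module.

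For the unique maximal submodule, the paper again passes through $\mathfrak{W}$: it argues that a trivial quotient of $\F(P,V(\delta_k,k))$ would produce a nonzero trivial quotient of $\F(A_d,V(\delta_k,k))^r$, which it declares impossible (implicitly invoking known structure of $\F(A_d,V(\delta_k,k))$). You give a direct, self-contained argument: a $W_d$-invariant functional $\phi$ vanishes on $I_0P\otimes V$, and the relation $x^r\partial_j=\partial_jx^r-r_jx^r$ in $\K_d$ turns the invariance condition into $\phi(z\otimes E_{ij}v)=\delta_{ij}\phi(z\otimes v)$, whence $(k-d)\phi=0$. This is cleaner and does not depend on any external input about $A_d$-modules.

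In short, your approach trades the weighting-functor machinery and cited facts from \cite{LLZ} about $\F(A_d,V(\delta_k,k))$ for a local computation at $I_0$ and a direct calculation with the $\gl_d$-action; the paper's approach has the virtue of tying the result back to the well-understood cuspidal picture, while yours is more self-contained.
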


\begin{proof}
(a). First note that $\F(P,V(\delta_k, k))$ has finitely many trivial simple $W_d$-subquotient since it is finitely generated over $U(\mh)$-modules. Then the result follows from the fact that any nontrivial simple $W_d$-sub-quotient of $F(P,V(δk,k))$ is $U(h)$-torsion free.

Then we note that $\F(P, V(\delta_k,k))$ has no trivial $W_d$-submodules,
since any submodule of a $U(\mh)$-torsion free module $\F(P, V(\delta_k,k))$ must be $U(\mh)$-torsion free. 

(b) follows from the above argument for $k=d$ and the facts $\L_d(P,d)$ is simple and $\F(P,V(\delta_d,d))/\L_d(P,d))$ is trivial.
%Since $\L_d(P,d)$ is simple and $\F(P,V(\delta_d,d))/\L_d(P,d))$ is trivial. The result holds for $k=d$. 

(c) Now take $1\leq k\leq d-1$. First suppose that $N$ is a nontrivial simple submodule of $\F(P, V(\delta_k,k))$. To the contrary, we assume that  $N\neq\L_d(P,k)$.
We have the submodule $N\oplus  \L_d(P,k)$ of  $\F(P, V(\delta_k,k))$
and $N\cong \L_d(P,k+1)$. By the definition of $\tilde \L_d(P,k)$ we deduce $\F(P, V(\delta_k,k))=$ $N\oplus \tilde  \L_d(P,k)$ as $W_d$-modules. Note that $\mathfrak{W}(P)\cong A_d^r$ as $\K_d$-modules. So
\begin{equation}\label{long}\aligned &\mathfrak{W}(N)\oplus\mathfrak{W}(\tilde  \L_d(P,k))=\mathfrak{W}(\F(P, V(\delta_k,k)))\\
\cong &\F(\mathfrak{W}(P), V(\delta_k,k))\cong \F(A_d, V(\delta_k,k))^r. \endaligned\end{equation}
 as admissible ${W}_d$-modules. 
 
By definitions, we have the $W_d$-module epimorphism
\begin{equation}\label{epi5.1}
\phi:\quad \mathfrak{W}(N)\cong\mathfrak{W}(\L_d(P,k+1))\to %=\mathfrak{W}({\rm Im}\ \pi_{k-1}) \rightarrow  {\rm Im}\ \mathfrak{W}(\pi_{k-1})=
\L_d(\mathfrak{W}(P),k+1)
\end{equation}
given by assigning the element
$\big(\sum\limits_{j=1}^{d} \partial_jy\otimes e_j\wedge v+I_n\L_d(P,k+1)\big)\otimes x^n$ to 
$ \sum\limits_{j=1}^{d}(\partial_jy+I_nP)\otimes x^n\otimes e_j \wedge v$.
%In particular, $\L_d(\mathfrak{W}(P),k)$ is a quotient $W_d$-module of $\mathfrak{W}(\L_d(P,k))$.
Now from Lemma \ref{lemma-main3} and Lemma \ref{lemma-dim}, we have 
$$\dim \L_d(P,k+1)/I_n\L_d(P,k+1)=r{{d-1}\choose k},\ \forall\ n\in\Z^d\setminus\{0\},$$ 
and it is also known that 
$$\dim \L_d(\mathfrak{W}(P), k+1))_n=r{{d-1}\choose k},\ \forall\  n\in\Z^d\setminus\{0\}.$$ 

Thus $\ker\phi$ in \eqref{epi5.1} is a trivial submodule of $\mathfrak{W}(N)$. 
On the other hand, $\F(A_d, V(\delta_k,k))$ does not have any nonzero trivial submodule. 
So $\ker(\phi)=0$ by \eqref{long} and $\mathfrak{W}(N)\cong\L_d(\mathfrak{W}(P),k+1)\cong\L_d(A_d,k+1)^r$, 
which implies $\L_d(A_d,k+1)$ is a direct summand of $\F(A_d, V(\delta_k,k))$, impossible since $\F(A_d, V(\delta_k,k))$ is indecomposible.
So we must have $N=\L_d(P,k)$.

Now suppose that $N'$ is a maximal $W_d$-submodule of $\F(P, V(\delta_k,k))$. We know that ${\L}_d(P,k)\subset N'$.
If $N'\neq \tilde{\L}_d(P,k)$, then $N'+\tilde{\L}_d(P,k)=\F(P, V(\delta_k,k))$ and 
$$\F(P, V(\delta_k,k))/N'\cong \tilde{\L}_d(P,k)/N'\cap\tilde{\L}_d(P,k)$$ 
is trivial, since $\L_d(P,k)\subseteq N'\cap\tilde{\L}_d(P,k)$. 
Take any $w\in \F(P, V(\delta_k,k))\setminus N'$. Then there exists 
$n\in\Z^d$ such that $w\notin I_n\F(P, V(\delta_k,k))$. Thus $w+I_n\F(P, V(\delta_k,k))$ is nonzero in $\mathfrak{W}(\F(P, V(\delta_k,k)))\cong \F(A_d, V(\delta_k,k))^r$ and gives rise to a 
nonzero trivial quotient module of $\F(A_d, V(\delta_k,k))^r$, impossible.
So we must have $N'=\tilde{\L}_d(P,k)$.
\end{proof}

\begin{corollary} \label{cor5.5} Let $M$ be a simple  $W_d$-module that  is a free $U(\mh)$-module of finite rank when restricted to $U(\mh)$.
Then $M$ is isomorphic to $\F( P, V)$ for a simple  $\K_d$-module $P$ that is  a a free $U(\mh)$-module module of finite rank, and a finite dimensional  simple  $\gl_d$-module $V$ which is not isomorphic to $V(\delta_k,k)$ for any $k=1,2,\ldots,d$.
\end{corollary}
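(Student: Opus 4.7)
The plan is to reduce to Theorem \ref{thm5.5}, which already presents $M$ as a simple quotient of some $\F(P, V)$ with $P$ a simple $\K_d$-module finitely generated over $U(\mh)$ and $V$ a finite-dimensional simple $\gl_d$-module. The freeness of $P$ over $U(\mh)$ is then automatic by Corollary \ref{free-3}(1). The substantive task is to pin down which $V$ can occur: when $V \not\cong V(\delta_k, k)$ for every $k \in \{1, \ldots, d\}$, the results quoted after Lemma \ref{lemma-main3} (i.e.\ Corollary 3.6 of [LLZ]) give that $\F(P, V)$ is already simple, so $M \cong \F(P, V)$; and in that case $M$ is manifestly $U(\mh)$-free, since by equation (2.5) the action of $\partial_j$ on $\F(P, V) = P \otimes V$ factors through the $P$-factor, giving $\F(P, V) \cong P^{\oplus \dim V}$ as a $U(\mh)$-module.

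The core of the argument is therefore to rule out $V \cong V(\delta_k, k)$ by contradiction. For $k \in \{1, \ldots, d-1\}$, Lemma \ref{trivial}(c) identifies $\tilde{\L}_d(P, k)$ as the \emph{unique} maximal submodule of $\F(P, V(\delta_k, k))$, so any simple quotient must be isomorphic to $\L_d(P, k+1)$; but by Lemma \ref{lemma-main3}(4), $\L_d(P, k+1)$ is never a free $U(\mh)$-module (since $k+1 \in \{2, \ldots, d\}$), contradicting the assumed freeness of $M$. For $k = d$, Lemma \ref{trivial}(b) says $\F(P, V(\delta_d, d))/\L_d(P, d)$ is trivial, so every maximal $W_d$-submodule contains $\L_d(P, d)$ with codimension-one trivial quotient; but a $1$-dimensional trivial $W_d$-module has each $\partial_j$ acting as zero and hence cannot be a nonzero free $U(\mh)$-module, again a contradiction.

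The main obstacle, namely understanding the submodule structure of the exceptional $\F(P, V(\delta_k, k))$ and extracting the precise simple quotients, has already been absorbed into Lemmas \ref{trivial} and \ref{lemma-main3}; so the corollary comes down to a clean two-case check. The only subtlety worth flagging is that the $k = d$ case needs the elementary but essential observation that trivial $W_d$-modules are $U(\mh)$-torsion, hence cannot be free of positive rank, which is exactly what distinguishes the present ``free'' setting from the broader ``finitely generated'' setting of Theorem \ref{thm5.5}.
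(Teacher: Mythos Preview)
Your proposal is correct and follows essentially the same route as the paper, which simply cites Theorem \ref{thm5.5} together with Lemmas \ref{lemma-main3} and \ref{trivial}; you have spelled out precisely how those ingredients combine, including the case split on $V$ and the elementary observation that a trivial simple quotient is $U(\mh)$-torsion and hence not free.
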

\begin{proof}It follows directly from Theorem \ref{thm5.5}, Lemmas \ref{lemma-main3} and \ref{trivial}. \end{proof}

\section{Simple $\K_d$-modules that are finitely generated $U(\mh)$-modules}

From Corollary \ref{free-3} we know that any  $\K_d$-module that is a  finitely generated $U(\mh)$-module is a free $U(\mh)$-module of finite rank. In this scetion we will characterize such  $\K_d$-module.
Let \begin{equation}\label{polynomail}
f_i=1+\sum_{j=1}^{n_i}a_{ij}x_i^j\in \K_d,\quad i=1,2\ldots,d
\end{equation} where $n_i\ge 1, a_{ij}\in U(\mh),  a_{in_i}\in \C^\star$. 
Define the $\K_d$-module $$S_{f_1,\ldots,f_d}=\K_d\Big/\Big(\sum_{i=1}^d \K_df_i\Big).$$

\begin{lemma} 
\begin{itemize}
\item[(1).] Any quotient $\K_d$-module of $S_{f_1,\ldots,f_d}$ is a finitely generated $U(\mh)$-module.
\item[(2).] As $\K_d$-module, $S_{f_1,\ldots,f_d}$ has finite composition length.
\item[(3).]  Any simple $\K_d$-module that is a finitely generated $U(\mh)$-module is isomorphic to a quotient of  $S_{f_1,\ldots, f_d}$ for some $f_i\in \K_d$ of the form in (6.1).
\end{itemize}
\end{lemma}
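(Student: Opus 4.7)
My plan is to handle the three parts in order, with (3) being the main difficulty.

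For (1), the relations $f_i = 0$ in $S_{f_1,\ldots,f_d}$ let us solve for $x_i^{n_i}$: since $a_{in_i} \in \C^\star$ is a unit, we obtain $x_i^{n_i} \equiv -a_{in_i}^{-1}\bigl(1 + \sum_{j=1}^{n_i-1} a_{ij} x_i^j\bigr)$ modulo $\sum_k \K_d f_k$. Iterating expresses any $x_i^k$ with $k \ge n_i$ as a $U(\mh)$-combination of $x_i^0, \ldots, x_i^{n_i-1}$. For negative powers, left-multiplying $f_i$ by $x_i^{-n_i}$ produces a relation whose lowest-degree term $x_i^{-n_i}$ has scalar coefficient, so we solve for $x_i^{-n_i}$ in terms of $x_i^{1-n_i}, \ldots, x_i^0$ and iterate. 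Hence $S_{f_1,\ldots,f_d}$ is $U(\mh)$-spanned by the finite set $\{x^r : 0 \le r_i < n_i \text{ for all } i\}$, so every quotient of it is $U(\mh)$-finitely generated.

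For (2), I would combine (1) with torsion-freeness. Since $U(\mh)$ is Noetherian and $S_{f_1,\ldots,f_d}$ is $U(\mh)$-finitely generated, every $\K_d$-submodule is $U(\mh)$-finitely generated; by Lemma~\ref{torsion}, any nonzero $\K_d$-subquotient is $U(\mh)$-torsion-free (since $\K_d$-modules are admissible $\widetilde{W}_d$-modules). Because rank over $U(\mh)$ is additive on short exact sequences of finitely generated torsion-free modules, every $\K_d$-composition chain in $S_{f_1,\ldots,f_d}$ has length bounded by $\rank_{U(\mh)}(S_{f_1,\ldots,f_d}) < \infty$.

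For (3), let $P$ be a simple $\K_d$-module that is $U(\mh)$-finitely generated; by Corollary~\ref{free-3}(1), $P$ is $U(\mh)$-free of some rank $r$, and $P = \K_d v$ for any nonzero $v \in P$ by simplicity. It suffices to produce one $v$ and elements $f_1, \ldots, f_d$ of the prescribed form with $f_i v = 0$ for each $i$, since then $\sum_i \K_d f_i \subseteq \operatorname{ann}_{\K_d}(v)$ yields a $\K_d$-surjection $S_{f_1,\ldots,f_d} \twoheadrightarrow P$. For each $i$, Noetherianness of $P$ over $U(\mh)$ makes the chain $\sum_{j=0}^n U(\mh) x_i^j v$ stabilize, producing a minimal relation $x_i^{n_i+1} v = \sum_{j=0}^{n_i} b_{ij} x_i^j v$ with $b_{ij} \in U(\mh)$ and $b_{i0} \ne 0$ (else $x_i^{n_i} v$ would reduce, contradicting minimality). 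When $b_{i0} \in \C^\star$, normalizing by $-b_{i0}^{-1}$ yields the desired $f_i$; in general I would pass to a higher-degree relation, where the growing dimension of the space of $U(\mh)$-linear dependencies among $v, x_i v, \ldots, x_i^N v$ gives the flexibility to combine coefficients into a relation whose constant and leading terms are both scalars. A single $v$ serving all $i$ at once is then obtained using commutativity of $x_1, \ldots, x_d$ and $\K_d$-cyclicity of $P$.

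The main obstacle is forcing the leading coefficient $a_{in_i}$ into $\C^\star$, not merely into $U(\mh)^\star$. I would address this by passing to the fraction field $H$ of $U(\mh)$: the localization $P_H := H \otimes_{U(\mh)} P$ is a finite-dimensional $H$-vector space on which $x_i$ acts invertibly and semilinearly with respect to the shift $\partial_j \mapsto \partial_j - \delta_{ij}$, so $P_H$ is a torsion module over the non-commutative PID of skew Laurent polynomials in $x_i$ with coefficients in $H$. The identity $x_i x_i^{-1} = 1$ in $\K_d$ forces the matrix of $x_i$ in any $U(\mh)$-basis of $P$ to have determinant in $\C^\star$, and this scalar-determinant property is exactly what permits rescaling a suitably chosen cyclic annihilator generator so that both its leading and constant coefficients lie in $\C^\star$ after clearing denominators, giving an $f_i$ of the required form.
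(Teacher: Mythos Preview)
Your arguments for (1) and (2) are correct and essentially identical to the paper's: for (1) the paper simply records $\K_d=\sum_i\K_d f_i+\sum_{0\le r_i<n_i}U(\mh)x^r$, and for (2) it cites Corollary~4.9, whose proof is exactly your rank-bound argument via Lemma~\ref{torsion}.

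For (3), however, there is a real gap. Your scheme produces, for each $i$, a monic relation $x_i^{n_i+1}v=\sum_{j=0}^{n_i}b_{ij}x_i^jv$ with $b_{i0}\in U(\mh)\setminus\{0\}$; the difficulty, as you note, is forcing $b_{i0}\in\C^\star$. Your determinant observation is correct (writing $x_i$ and $x_i^{-1}$ as $\sigma$-semilinear maps with matrices $A,B$ gives $A\,\sigma(B)=I$, hence $\det A\in U(\mh)^\times=\C^\star$), but you do not explain how this produces an annihilator of $v$ of the form~(6.1): there is no Cayley--Hamilton theorem for $\sigma$-semilinear operators that outputs such a polynomial, and over the skew Laurent ring $H[x_i^{\pm1};\sigma]$ the generator of the annihilator of $v$ can be taken monic, but its constant term lies only in $H^\star$, not $\C^\star$. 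The sentences about ``growing dimension of dependencies'' and ``rescaling a suitably chosen cyclic annihilator generator'' do not constitute a proof.

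The paper sidesteps all of this with a one-line trick you missed: instead of looking only at nonnegative powers, take the $U(\mh)$-span of \emph{all} $x_j^iv$, $i\in\Z$. This is finitely generated, say equal to $\sum_{i=s}^k U(\mh)x_j^iv$. Then \emph{both} $x_j^{s-1}v$ and $x_j^{k+1}v$ lie in this span, giving two relations whose sum
\[
\Big(x_j^{s-1}-\sum_{i=s}^k(h_i+g_i)\,x_j^{i}+x_j^{k+1}\Big)v=0
\]
already has scalar coefficient $1$ at both extremes. Left-multiplying by $x_j^{1-s}$ (which only shifts the $U(\mh)$-coefficients by an automorphism and preserves the scalar extremes) yields $f_j$ of the form~(6.1) with $f_jv=0$, and no localization, skew PIDs, or determinant arguments are needed.
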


\begin{proof}  (1).  It is easy to see that
$$\K_d=\sum_{i=1}^{d}\K_df_i+\sum_{0\le r_i\le n_i-1} U(\mh)x^r.$$
So $S_{f_1,\ldots,f_d}$ is a finitely generated $U(\mh)$-module, and hence any quotient $\K_d$-module of of $S_{f_1,\ldots,f_d}$ is a finitely generated $U(\mh)$-module.

(2) follows from Corollary 4.9.

(3). Now suppose that $M$ is a  simple $\K_d$-module that is a finitely generated $U(\mh)$-module when restricted to $U(\mh)$. Take a nonzero vector $v\in M$. For any $j=1,\cdots,d$, consider the $U(\mh)$-submodule $M_j$ of $M$  generated by $\C[x_j^{\pm1}]v$ which is finitely generated as a $U(\mh)$-module.
There exists some $s,k\in \Z$ such that $M_j=\sum_{i=s}^k U(\mh)x_j^i v$.  
Since $x_j^{s-1}v, x_j^{k+1}v\in M$, we can find $h_i, g_i\in U(\mh)$ such that
$$x_j^{s-1}v=\sum_{i=s}^k h_i(\partial)x_j^iv, \ \ \ \  x_j^{k+1}v=\sum_{i=s}^k g_i(\partial)x_j^iv.$$ 
Clearly $(x_j^{s-1}-\sum_{i=s}^k(h_i+g_i)x_j^i+x_j^{k+1})v=0$. We can take
  $$f_j(x)=x_j^{1-s}(x_j^{s-1}-\sum_{i=s}^k(f_i+g_i)x_j^i+x_j^{k+1}),\ \ \ j=1,\cdots,d.$$
 Thus  $M$ is isomorphic to a simple quotient of $S_{f_1,\ldots, f_d}$.
\end{proof}

\begin{example} Let $k=1, 2, \cdots, d, A^{(k)}=\C[x_k^{\pm1}]$ and $\K^{(k)}=\C[x_k^{\pm 1}, \partial _k]$. Let $f_k=\partial_k-g_k(x_k)$, where $g_k(x_k)=\sum\limits_{i=-m_k}^{n_k} a_{k,i}x_k^i \in A^{(k)}$ with
$a_{k,i}\in\C$, $a_{k,m_k}a_{k,n_k}\ne 0$ and $m_k, n_k>0$. Then we have the  $\K^{(k)}$-module $S^{(k)}_{f_k}=\K^{(k)}/\K^{(k)}f_k\cong A^{(k)}$ with the actions:
 $$x^i x_k^l=x_k^{i+l} , \  \  \partial_kx_k^l=x_k^l(l+g(x_k ) ),\ \forall\ i,l\in\Z.$$
From Theorem 12 (3) in \cite{LGZ} we know that   $S^{(k)}_{f_k}$ is a simple module over $\K^{(k)}.$
 It is easy to see that $S^{(k)}_{f_k}$ is a finitely generated $\C[\partial_k]$-module. Moreover, $S^{(k)}_{f_k}$ is a free $\C[\partial_k]$-module of rank $m_k+n_k$. 
In particular, we have simple $\K^{(k)}$-module that is a free $\C[\partial_k]$-module of any given positive rank. 
 
 By using the tensor product, we have simple $\K_d$-modules $S_{f_1, f_2, \cdots, f_k}$ that are free $U(\mh)$-modules of any given positive rank. Using Corollary \ref{cor5.5} we have simple $W_d$-module that is a free $U(\mh)$-module of any given positive rank.
 \end{example}

\

\begin{center}
\bf Acknowledgments
\end{center}

\noindent 
X.G. is partially supported by NSF of China (Grant 11471294) and the Outstanding Young Talent Research Fund of Zhengzhou University (Grant 1421315071); 
G.L. is partially supported by NSF of China (Grant 11301143) and  the grants  of Henan University (2012YBZR031, 0000A40382); 
R.L. was partially supported by the NSF of China (Grant 11471233, 11371134); 
K.Z. is partially supported by  NSF of China (Grant 11271109) and NSERC.

\

\

 \noindent X.G.:  School of Mathematics and Statistics, Zhengzhou University,
Zhengzhou, P. R. China.  Email: guoxq@zzu.edu.cn 

\vspace{0.2cm}  \noindent G.L.: College of Mathematics and Information
Science, Henan University, Kaifeng 475004, China. Email:
liugenqiang@amss.ac.cn

\vspace{0.2cm}\noindent R.L.: Department of Mathematics, Soochow University, Suzhou, P. R. China.  Email: rencail@amss.ac.cn

\vspace{0.2cm} \noindent K.Z.:     College of
Mathematics and Information Science, Hebei Normal (Teachers)
University, Shijiazhuang, Hebei, 050016 P. R. China, and Department of Mathematics, Wilfrid
Laurier University, Waterloo, ON, Canada N2L 3C5. Email:
kzhao@wlu.ca

\end{document}